\let\origsection=\section \def\section{\@ifstar{\origsection*}{\mysection}} 
\def\mysection{\@startsection{section}{1}\z@{.7\linespacing\@plus\linespacing}{.5\linespacing}{\normalfont\scshape\centering\S}}
\renewcommand{\PrintDOI}[1]{\doi{#1}}
\numberwithin{equation}{section}
\let\polishlcross=\l
\def\l{\ifmmode\ell\else\polishlcross\fi}
\let\setminus=\smallsetminus
\def\moverlay{\mathpalette\mov@rlay}
\def\mov@rlay#1#2{\leavevmode\vtop{   \baselineskip\z@skip \lineskiplimit-\maxdimen
   \ialign{\hfil$\m@th#1##$\hfil\cr#2\crcr}}}
\newcommand{\charfusion}[3][\mathord]{
    #1{\ifx#1\mathop\vphantom{#2}\fi
        \mathpalette\mov@rlay{#2\cr#3}
      }
    \ifx#1\mathop\expandafter\displaylimits\fi}
\newcommand{\dcup}{\charfusion[\mathbin]{\cup}{\cdot}}
\DeclareFontFamily{U}  {MnSymbolC}{}
\DeclareSymbolFont{MnSyC}         {U}  {MnSymbolC}{m}{n}
\DeclareFontShape{U}{MnSymbolC}{m}{n}{
    <-6>  MnSymbolC5
   <6-7>  MnSymbolC6
   <7-8>  MnSymbolC7
   <8-9>  MnSymbolC8
   <9-10> MnSymbolC9
  <10-12> MnSymbolC10
  <12->   MnSymbolC12}{}
\DeclareMathSymbol{\powerset}{\mathord}{MnSyC}{180}
\theoremstyle{plain}
\newtheorem{thm}{Theorem}[section]
\newtheorem{fact}[thm]{Fact}
\newtheorem{prop}[thm]{Proposition}
\newtheorem{clm}[thm]{Claim}
\newtheorem{cor}[thm]{Corollary}
\newtheorem{lem}[thm]{Lemma}
\theoremstyle{definition}
\let\theta=\vartheta
\let\rho=\varrho
\let\phi=\varphi
\def\ZZ{\mathds Z} 
\def\RR{\mathds R}
\let\polishlcross=\l
\def\l{\ifmmode\ell\else\polishlcross\fi}
\def\lra{\longrightarrow}
\let\sm=\smallsetminus
\newtheoremstyle{note}  {4pt}  {4pt}  {\sl}  {}  {\itshape}  {.}  {.5em}          {}
\theoremstyle{note}
\begin{document}
\title[$K_4$-free graphs have sparse halves]{$K_4$-free graphs have sparse halves}

\author{Christian Reiher}
\address{Fachbereich Mathematik, Universit\"at Hamburg, Hamburg, Germany}
\email{Christian.Reiher@uni-hamburg.de}

\subjclass[2010]{}
\keywords{Sparse halves, $K_4$-free graphs, Ramsey-Tur\'an theory}

\begin{abstract}
	Every $K_4$-free graph on $n$ vertices has a set of 
	$\lfloor n/2\rfloor$ vertices spanning at most~$n^2/18$ edges. 
\end{abstract}

\maketitle

\section{Introduction}\label{sec:introduction}

By a well-known theorem of Mantel~\cite{M} every graph on $n$ vertices with more 
than $\frac14n^2$ edges contains a triangle. As balanced complete bipartite graphs 
show, the number $\frac14 n^2$ appearing in this result is optimal. However, the 
only extremal graph for this problem contains a large independent 
set (of size $\lceil \frac12n\rceil$) and thus it is natural to wonder whether graphs 
with fewer yet more uniformly distributed edges need to contain triangles as well. 
An especially prominent and still open version of this question, due to Erd\H{o}s~\cite{E1}, 
asks whether every $n$-vertex graph~$G$ contains a triangle provided that all 
sets $X\subseteq V(G)$ of $|X|=\lfloor \frac12n\rfloor$ vertices span more 
than $\frac1{50}n^2$ edges. 
Balanced blow-ups of the pentagon show that the denominator $50$ would be optimal. 
Moreover, Simonovits observed that balanced blow-ups of the Petersen graph achieve 
equality as well (e.g.\ see~\cite{Er97a}). 
This so-called {\it sparse halves conjecture} attracted 
the attention of many researchers~\cites{BMPP2018, K, KeSu06, NY}; despite 
the recent investigations of Razborov~\cite{R} the problem remains elusive.

There is a similar situation for graphs not containing a $K_4$, i.e., four mutually adjacent 
vertices. Generalising Mantel's result Tur\'an~\cite{T} proved that every graph on $n$ 
vertices possessing more than $\frac13n^2$ edges contains a $K_4$. The extremal graphs for 
this problem are balanced tripartite graphs and thus they contain independent sets of size 
$\lceil \frac13n\rceil$ as well as sets of size $\lfloor \frac12n\rfloor$ spanning only 
about $\frac1{18}n^2$ edges (the precise value depending on the residue class of $n$ 
modulo $6$). This state of affairs prompted Chung and Graham~\cite{CG} and, independently, 
Erd\H{o}s et al.\ \cite{EFRS} to formulate a sparse halves conjecture for $K_4$-free graphs, 
which is resolved in this article and restated in the above abstract. 
A more precise version of our main result reads as follows.

\begin{thm}\label{thm:main}
	If a graph $G$ on $n$ vertices has the property that every set 
	$X\subseteq V(G)$ of size $|X|=\lfloor \frac12n\rfloor$ spans at 
	least $\frac1{18}n^2$ edges, then either $G$ contains a $K_4$ 
	or $n$ is divisible by $6$ and~$G$ is a tripartite Tur\'an graph.
\end{thm}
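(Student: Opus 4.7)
The plan is to argue by contrapositive: fix a $K_4$-free graph $G$ on $n$ vertices, assume $G$ is not the balanced tripartite Tur\'an graph (an assumption that is vacuous unless $6\mid n$), and exhibit a set $X\subseteq V(G)$ of size $\lfloor n/2\rfloor$ with $e(X)<n^2/18$. The standing structural tools are that for every vertex~$v$ the neighbourhood $N(v)$ is triangle-free, and that for every edge $uv$ the common neighbourhood $N(u)\cap N(v)$ is independent. A first averaging observation---a uniformly random $\lfloor n/2\rfloor$-set spans $\tfrac14 e(G)(1+o(1))$ edges in expectation---shows that the hypothesis forces $e(G)\ge \tfrac29 n^2-o(n^2)$, placing $G$ comfortably within a Tur\'an-type stability regime.

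The structural heart of the plan is to fix a triangle $\{a,b,c\}$ in $G$ (the triangle-free case being handled separately by a direct argument using a large independent set) and to analyse the partition of $V(G)\setminus\{a,b,c\}$ according to which of $a,b,c$ each vertex sees. By $K_4$-freeness no vertex is adjacent to all three, and the three pairwise common neighbourhoods are independent. To construct the sparse half, I would set $A=N(a)$, invoke Mantel on the triangle-free $G[A]$ to extract an independent set $I\subseteq A$ of size at least $\lceil|A|/2\rceil$, and take $X=I\cup J$, where $J$ consists of the $\lfloor n/2\rfloor-|I|$ vertices in $V\setminus I$ of smallest degree into $I$. Then $e(X)=e(I,J)+e(J)$: the first summand is controlled by averaging the $I$-degrees over $V\setminus I$, while the second is bounded using the $K_4$-free structure of $G[V\setminus I]$. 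Carrying this out with each of $a,b,c$ playing the role of the pivot vertex (and, if needed, with several candidate triangles) and then averaging or taking the best of the resulting halves should deliver an $X$ with $e(X)\le n^2/18$, with equality forcing $G=T_3(n)$ and $6\mid n$.

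The chief obstacle is the sharpness of the target: because $T_3(n)$ saturates $n^2/18$ exactly, the argument must be stability-sharp and convert every deviation of $G$ from $T_3(n)$---unbalanced tripartition sizes, a non-bipartite $N(a)$, missing cross-class edges, or extra same-class edges---into a strictly smaller value of $e(X)$ for some explicit $X$. This will likely require a detailed case analysis, packaged as a rigid uniqueness statement for the extremal configuration, together with careful bookkeeping for the parity of $n$ and its residue modulo $6$, both of which perturb the extremal count slightly and must be tracked throughout the final inequality.
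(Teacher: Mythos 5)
Your plan has two concrete gaps that I do not see how to repair within the proposed framework. First, the step ``invoke Mantel on the triangle-free $G[A]$ to extract an independent set $I\subseteq A$ of size at least $\lceil|A|/2\rceil$'' is false: Mantel's theorem bounds edge counts, not independence numbers, and triangle-free graphs on $m$ vertices can have independence number as small as $O(\sqrt{m\log m})$. To extract large independent sets from a neighbourhood $N(v)$ one must use not merely that $G[N(v)]$ is triangle-free but that it inherits a local density condition (every $(\lfloor n/2\rfloor - (n-d(v)))$-subset of $N(v)$ spans many edges), and converting that into large independent sets is exactly the Ramsey--Tur\'an input (Andr\'asfai-type arguments, Lemma~\ref{lem:1243} above) that your sketch omits. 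Second, the quantitative backbone is missing. The averaging bound you cite gives only $e(G)\ge \frac29 n^2-o(n^2)$, which is far below the $\frac13 n^2$ of the Tur\'an graph, so $G$ is \emph{not} ``comfortably within a Tur\'an-type stability regime''; closing the gap from $\frac29$ to $\frac13$ is the bulk of the work (here it occupies Lemmata~\ref{p:31}--\ref{l:43}, pushing $\gamma$ from $\frac14$ to $\frac7{24}$ and finally to $\frac13$ via three nearly-independent sets found by a weighted triangle count). You correctly identify the sharpness of the constant $\frac1{18}$ as ``the chief obstacle,'' but the proposal offers no mechanism for overcoming it: choosing $J$ as the low-degree vertices into $I$ and ``averaging or taking the best of the resulting halves'' is a heuristic, not an argument, and for the exact constant every loss must be accounted for.

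For comparison, the actual proof does not take a single pivot vertex and build one candidate half. It first reduces to even $n$ by a blow-up argument (your parity bookkeeping would otherwise be genuinely painful), then shows that for any two disjoint independent sets $A,B$ one has $e(A,B)\le e(G)-\frac29 n^2$, finds a triangle whose three codegree sets are independent and jointly large, extends them to a partition $V=V_1\dcup V_2\dcup V_3\dcup Z$ with $Z$ small, and plays lower and upper bounds on $e(G)$ in terms of $|Z|$ against each other until $Z=\emptyset$ and $e(G)\ge\frac13 n^2$, at which point Tur\'an's theorem finishes. Your triangle-pivot idea points in the direction of this three-independent-set decomposition, but without the two-independent-set inequality, the local density lemma for sets of size in $[\frac13 n,\frac12 n]$, and the Ramsey--Tur\'an extraction lemma, the plan cannot reach the sharp bound.
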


The most recent contribution to this problem is due to X.~Liu and J.~Ma~\cite{LM}, who 
proved Theorem~\ref{thm:main} under the additional assumption that $G$ is regular. 
Several parts of their argument do not depend on degree regularity and 
we shall utilise some of their results. Another important idea of our approach 
is to analyse the neighbourhoods of vertices by means of Ramsey-Tur\'an theory, 
appealing to a lemma due to \L uczak, Polcyn, and Reiher. 

Throughout the rest of this article we shall call an $n$-vertex graph $G$ {\it extremal} 
if it is $K_4$-free and every set $X\subseteq V(G)$ of size $|X|=\lfloor \frac12n\rfloor$ 
spans at least $\frac1{18}n^2$ edges. Thus we are to prove that every extremal graph is a 
tripartite Tur\'an graph whose number of vertices is divisible by~$6$. 

\section{Preliminaries}
\subsection{Parity} \label{subsec:par}
A well-known blow-up argument, which we recapitulate below, shows that it suffices to 
establish Theorem~\ref{thm:main} for even values of $n$. In other words, we only need 
to prove the following apparently weaker claim. 

\begin{prop}\label{p:main}
	Let $G$ be an extremal graph on $n$ vertives. If $n$ is even, then it is divisible by $3$ 
	and $G$ is a tripartite Tur\'an graph.
\end{prop}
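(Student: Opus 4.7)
The plan is to argue by contradiction: suppose $G$ is an extremal graph on an even number $n$ of vertices that is either not divisible by $3$ or not isomorphic to the tripartite Tur\'an graph $T_3(n)$. My goal would be first to show that $G$ is quantitatively close to $T_3(n)$ in edge count and in structure, and then to use a local analysis of neighbourhoods together with the sparse-halves hypothesis to pin down that $G$ must be \emph{exactly} $T_3(n)$.

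For the edge count, a double count over all $n/2$-subsets $X$ gives
\[
	e(G) \cdot \frac{\binom{n-2}{n/2-2}}{\binom{n}{n/2}} \;\geq\; \tfrac{1}{18}n^2,
\]
which yields $e(G) \gtrsim \tfrac{2}{9}n^2$; a more careful argument exploiting that the inequality holds for \emph{every} half-set, not just on average, should push this close to the Tur\'an bound $\tfrac{1}{3}n^2$. Combined with Erd\H{o}s--Simonovits stability (and the Andr\'asfai--Erd\H{o}s--S\'os theorem for the minimum-degree version), this yields a tripartition $V(G) = V_1 \cup V_2 \cup V_3$ with each $|V_i|$ close to $n/3$ and with few bad edges (inside a part or missing between two parts).

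Next I would analyse each neighbourhood. Since $G$ is $K_4$-free, every $N(v)$ is triangle-free; and since $G$ is close to $T_3(n)$, a typical vertex $v$ has degree close to $2n/3$, so $N(v)$ is a rather dense triangle-free graph on about $2n/3$ vertices. The \L uczak--Polcyn--Reiher Ramsey--Tur\'an lemma alluded to in the introduction should force each such $N(v)$ to look like an almost complete bipartite graph with sides approximately equal to two of the $V_i$; this strongly constrains the adjacencies and, combined with the tripartition, pushes $G$ toward being a blow-up of a tripartite pattern.

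The main obstacle is the final exact step: ruling out \emph{any} deviation from $T_3(n)$, including a single vertex of wrong degree or a single misplaced edge. For each hypothetical aberration---a non-edge between two $V_i$, an edge inside some $V_i$, or a part of the wrong size---one would want to construct an explicit $n/2$-subset $X$ witnessing $e(G[X]) < \tfrac{1}{18}n^2$. The Liu--Ma argument performs precisely such surgery under the additional assumption of regularity, so the new content here should be to extract, from the neighbourhood information dictated by the Ramsey--Tur\'an lemma, enough approximate regularity (or enough control over irregular vertices) to redeploy their analysis. This fusion of Ramsey--Tur\'an structure with a robust version of the Liu--Ma scheme is where I expect the real difficulty of the proof to lie.
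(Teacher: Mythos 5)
Your outline leaves the two hardest steps unproved, and the first of them cannot be carried out the way you suggest. The averaging over half-sets gives only $e(G)\gtrsim\frac29 n^2$, and your hope that a ``more careful argument'' pushes this close to the Tur\'an bound $\frac13n^2$ is precisely the crux of the problem: before the very last step the paper only ever establishes $e(G)\ge\frac7{24}n^2$ (Lemma~\ref{l:33}), and even that requires the nontrivial combination of Lemma~\ref{p:31} (an upper bound on $e(A,B)$ for disjoint independent sets) with the Ramsey--Tur\'an Lemma~\ref{lem:1243} applied inside a neighbourhood; Liu and Ma's best bound was $\frac{32}{123}n^2$. Since the edge density is nowhere near $\frac13n^2$ at that stage, Erd\H{o}s--Simonovits stability is not applicable, so your proposed route to an approximate tripartition is circular. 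The paper instead manufactures its own stability: a triangle-counting/Cauchy--Schwarz argument (Lemma~\ref{l:41}) produces three disjoint independent sets covering all but a $z\le\frac15$ fraction of the vertices, and one then takes such a partition $V=V_1\dcup V_2\dcup V_3\dcup Z$ with $|Z|$ minimal.

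The second gap is the exact step, which you explicitly defer. The paper's endgame is not vertex-by-vertex surgery on aberrations but a closed chain of inequalities: a lower bound $\gamma\ge\frac{1+z+z^2}{3(1+2z)}$ (Lemma~\ref{l:42}), an upper bound $\gamma\le\frac13-\frac z4+\frac{z^2}{36}$ obtained by bounding every degree via the minimality of $|Z|$ together with Lemma~\ref{lem:1243} and Nesbitt's inequality, and finally Lemma~\ref{l:43} applied to a partition $V=A_1\dcup A_2\dcup A_3$ obtained by distributing $Z$ into the parts. These force $z=0$ and $\gamma\ge\frac13$, whereupon Tur\'an's theorem alone identifies $G$ as the tripartite Tur\'an graph --- no robust version of the Liu--Ma surgery is needed. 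Your instinct to exploit the Ramsey--Tur\'an lemma inside neighbourhoods is the right one (it is the engine behind Corollary~\ref{c:22}, Lemma~\ref{l:33} and Claim~\ref{clm:0044}), but as written the proposal is a plan whose decisive quantitative steps are missing.
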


The reason why this statement implies Theorem~\ref{thm:main} is as follows: If $n$ is odd
we construct a graph $H$ by replacing every vertex $x$ of $G$ by two new vertices $x'$, $x''$
and every edge $xy\in E(G)$ by all four possible edges from $\{x', x''\}$ to $\{y', y''\}$.
Evidently, $H$ is still $K_4$-free and it is not difficult to verify that any $n$ vertices 
of $H$ span at least $\frac29n^2$ edges. So Proposition~\ref{p:main} tells us that $2n$
is divisible by $3$ and that $H$ is a tripartite Tur\'an graph. Consequently, $G$ itself 
is the complete tripartite graph all of whose vertex classes have size $\frac 13n$. Recalling
that $n$ is odd we can form a set $X\subseteq V(G)$ of size $\lfloor \frac12n\rfloor$
by taking one of these vertex classes together with $\lfloor \frac16n\rfloor$ vertices from 
another vertex class. But now $e(X)=\frac 13n\cdot \lfloor \frac16n\rfloor < \frac 1{18}n^2$
contradicts the hypothesis. 

\subsection{Ramsey-Tur\'an theory}
Given natural numbers $n\ge s\ge 1$ and $r\ge 3$ Vera T.~S\'os asked to determine 
the largest number $f_r(n, s)$ of edges that a $K_r$-free graph on $n$ vertices 
can have provided that none of its independent sets consists of more than $s$ vertices. 
For instance, in the case $r=3$ one can exploit that in triangle-free graphs the 
neighbourhoods of all vertices are independent sets. This yields the {\it trivial 
bound} $f_3(n, s)\le \frac12ns$, which can be shown to be sharp in surprisingly many 
cases~\cite{B10}. 
The study of $K_4$-free graphs turned out to be much more difficult and following the 
works~\cites{BE76, EHSS, FLZ15, Sz72} 
it has been proved in~\cite{LR-a} 
that $f_4(n, \delta n)=\frac12\bigl(\frac14+\delta-\delta^2+o(1)\bigr)n^2$ holds for every sufficiently 
small $\delta>0$ and that there are analogous results for larger forbidden cliques.

When $\frac sn$ is large, however, even the case $r=3$ of triangle-free graphs is not 
completely understood. 
An old result due to Andr\'asfai~\cite{A} asserts $f_3(n, s)=n^2-4ns+5s^2$ 
whenever $s\in \bigl[\frac 25n, \frac 12n\bigr]$. This was recently reproved 
by \L uczak, Polcyn, and Reiher~\cite{LPR2}
and their alternative proof suggests the following result. 

%
%

\begin{lem}\label{lem:1243}
	Let $G=(V, E)$ be a triangle-free graph on $n$ vertices. If for some natural number 
	$m\le n$ it is the case that every set $X\subseteq V$ of size $m$ induces at 
	least $\frac29m^2$ edges, then there are disjoint independent sets $A, B\subseteq V$
	such that $t=|A|+|B|$ 
	satisfies 
	\begin{equation}\label{eq:1925}
		n\le \frac{2m}3+\frac{5t}8-\frac{2m^2}{9t}\le \frac m3+\frac{3t}4 \,.
	\end{equation}
	In particular, the assumption is only satisfiable for $m\ge \frac34 n$.
\end{lem}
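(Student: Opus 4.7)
The plan is to split the proof into a short algebraic preliminary and a structural argument. Rearranging the right-hand inequality in~\eqref{eq:1925} (and multiplying by~$8$) shows that it is equivalent to $\tfrac{8m}{3}\le t+\tfrac{16m^{2}}{9t}$, which is an instance of AM--GM with equality at $t=\tfrac{4m}{3}$ and therefore holds unconditionally. Combining the full chain in~\eqref{eq:1925} with the obvious bound $t\le n$ (since $A\cup B\subseteq V$) then yields $n\le\tfrac{m}{3}+\tfrac{3n}{4}$, i.e.\ $m\ge\tfrac{3n}{4}$, so the ``in particular'' conclusion follows for free once the main claim is established. The real task is to exhibit disjoint independent sets $A,B\subseteq V$ whose total size~$t$ satisfies the left-hand inequality.

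For the construction of $A,B$, I would use the canonical source of disjoint independent sets in a triangle-free graph: for every edge $uv\in E$, the sets $N(u)$ and $N(v)$ are each independent and are disjoint (a common neighbour would close a triangle). I would pick an edge $uv$ maximising $d(u)+d(v)$ and set $A:=N(u)$, $B:=N(v)$, $t:=d(u)+d(v)\le n$. The maximality of $d(u)+d(v)$ over edges supplies the useful degree bound $d(x)+d(y)\le t$ for every edge $xy\in E$, which is needed to control vertices in $C:=V\setminus(A\cup B)$.

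To derive the left-hand inequality I would apply the density hypothesis to a carefully chosen $m$-set~$X$. The natural choice is $X:=C\cup S$ with $S\subseteq A\cup B$ of size $|S|=m-(n-t)=m+t-n$, so that $|X|=m$; this requires $n-t\le m$, which should follow in the relevant regime from the convexity bound $t\ge 4e(G)/n\ge\tfrac{8m^{2}}{9n}$. Since $A$ and $B$ are independent, the only edges of $G[X]$ lie inside $C$, between $C$ and $S$, or between $S\cap A$ and $S\cap B$. Triangle-freeness together with the degree bound above gives upper estimates on each of these pieces in terms of $|C|,\,|S\cap A|$ and $|S\cap B|$; comparing with $e(X)\ge\tfrac{2}{9}m^{2}$ and optimising the split $|S\cap A|$ versus $|S\cap B|$ should produce the inequality in the sharp form of~\eqref{eq:1925}.

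The hard part will be bounding the edges inside $C$ (and between $C$ and $A\cup B$), where triangle-freeness alone seems too weak. I expect one needs either to average over many choices of~$S$, to invoke a Mantel-type bound $e(C)\le\tfrac14|C|^{2}$, or to strengthen the choice of~$uv$ so that the vertices of~$C$ are ``well covered'' by $A\cup B$. The precise constants $\tfrac58$ and $\tfrac29$ appearing in~\eqref{eq:1925} suggest that the concluding optimisation is tight and that the extremal configuration is an explicit Andr\'asfai-type triangle-free graph on which equality can be verified by direct computation.
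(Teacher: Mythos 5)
Your algebraic framing is fine: the right-hand inequality in~\eqref{eq:1925} is indeed an unconditional AM--GM statement (equality at $t=\tfrac{4m}{3}$), and the ``in particular'' clause does follow from the chain together with $t\le n$. But the heart of the lemma is the left-hand inequality, and there your proposal has a genuine gap that you yourself flag in the final paragraph. Choosing $A=N(u)$, $B=N(v)$ for an edge $uv$ maximising $d(u)+d(v)$ gives you only the edge-degree bound $d(x)+d(y)\le t$ and Mantel's bound $e(C)\le\tfrac14|C|^2$ on $C=V\setminus(A\cup B)$, and these are not enough to reach the sharp constants: a vertex of $C$ may have almost all of its neighbours inside $C\cup S$, and no averaging over the choice of $S\subseteq A\cup B$ repairs this, because the problematic edges lie entirely inside the fixed set $C$ (or run from $C$ into all of $A\cup B$, so they survive the averaging with a large coefficient). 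The step ``optimising the split $|S\cap A|$ versus $|S\cap B|$ should produce the inequality in the sharp form'' is exactly the part that needs a new idea, and none of the three escape routes you list supplies one.

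The actual proof makes a structurally different choice: $A$ is a \emph{maximum} independent set and $B$ a maximum independent set of $G-A$. This buys three things your construction lacks: (i) the universal degree bound $d(y)\le|A|$ and the bound $|N(x)\setminus A|\le|B|$ for every vertex $x$, which control $e(A,V\setminus A)$ and $e(V\setminus A)$ and lead, via an averaging over $\tfrac12n$-sets containing $A$, to the key preliminary inequalities $\frac{m-|A|}{n-|A|}|B|\ge\frac m3$ and $|A|+2|B|+m>2n$; (ii) the matching lemma of \L uczak, Polcyn, and Reiher (Lemma~\ref{l:isolating}), which is used to show that after removing suitable subsets $X,Y\subseteq B$ one still has $|N(a)\setminus(X\dcup Y)|\le n-|A|-|B|$ for every $a\in A$ --- this is the substitute for the missing control on edges leaving $C$; and (iii) the density hypothesis is then applied to the single set $V\setminus X$ of size $m$ (with $X\subseteq B$ of size $n-m$), not to $C\cup S$, and Mantel is applied only to the residual set $V\setminus(A\dcup X\dcup Y)$ of size $2(n-|A|-|B|)$. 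Without an analogue of (i) and (ii) for your $A,B$, the proposal cannot be completed as written; I would recommend restarting from the maximum-independent-set choice.
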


Before beginning the proof we quote~\cite{LPR2}*{Lemma~2.2}. Here $\alpha(G)$ denotes 
the maximum size of an independent set in a graph $G$.

\begin{lem}[\L uczak, Polcyn, and Reiher]\label{l:isolating}
Given a graph $G$, suppose
	\begin{enumerate}
		\item[$\bullet$]  that $A\subseteq V(G)$ is an independent set of size $\alpha(G)$, 
		\item[$\bullet$] and that $M$ is a matching in $G$ from $V(G)\setminus A$ to $A$, 
			the size of which is as large as possible. 
	\end{enumerate}
	If $G'$ denotes the graph obtained from $G$ by isolating the vertices in 
	$A\setminus V(M)$, i.e., by deleting all edges incident with them, then 
	$\alpha (G')=\alpha(G)$. 
\end{lem}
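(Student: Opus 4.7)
My plan is to produce the two disjoint independent sets by first taking $A$ to be a maximum independent set in $G$ and then appealing to Lemma~\ref{l:isolating} to expose structural information from which $B$ can be extracted. Let $\alpha=\alpha(G)$ and fix a maximum independent set $A$ of size $\alpha$; since the hypothesis gives $e(X)\ge\tfrac{2}{9}m^2>0$ for every $X$ of size $m$, no $m$-set is independent, so in particular $\alpha<m$. Let $M$ be a maximum matching from $V\setminus A$ to $A$ and apply Lemma~\ref{l:isolating} to obtain a graph $G'$ in which the set $A_0:=A\setminus V(M)$ has been isolated while $\alpha(G')=\alpha$; maximality of $M$ also forces that no vertex of $W_0:=(V\setminus A)\setminus V(M)$ has a neighbour in $A_0$. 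Since $G$ is triangle-free, for every pair of adjacent vertices $u,v\in V\setminus A$ one has $N_A(u)\cap N_A(v)=\emptyset$ and hence $d_A(u)+d_A(v)\le\alpha$; consequently, any collection of vertices in $V\setminus A$ whose $A$-degrees all exceed $\alpha/2$ is independent. This gives a natural candidate for $B$, namely a suitable subset of $\{v\in V\setminus A:d_A(v)>\alpha/2\}$.

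To derive the main inequality, I would feed carefully chosen $m$-subsets into the density hypothesis. A first natural choice is $X=A\cup Y$ with $Y\subseteq V\setminus A$ of size $m-\alpha$: since $A$ is independent,
\[
    e(X)=e(Y)+\sum_{y\in Y}d_A(y)\le\binom{m-\alpha}{2}+\sum_{y\in Y}d_A(y),
\]
so the hypothesis yields $\sum_{y\in Y}d_A(y)\ge\tfrac{2}{9}m^2-\binom{m-\alpha}{2}$ for every such $Y$. Taking $Y$ to be the $m-\alpha$ vertices of $V\setminus A$ with smallest $A$-degree converts this into information about the degree distribution of $V\setminus A$ towards $A$, which can then be matched with the splitting of $V\setminus A$ according to whether $d_A>\alpha/2$ (i.e.\ membership in $B$). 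Combined with a similar use of the hypothesis on $m$-subsets of the form $(A\cup B)\cup Z$ for $Z\subseteq V\setminus(A\cup B)$, the two constraints should rearrange into $n\le\tfrac{2m}{3}+\tfrac{5t}{8}-\tfrac{2m^2}{9t}$.

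The second inequality $\tfrac{5t}{8}-\tfrac{2m^2}{9t}\le\tfrac{m}{3}+\tfrac{3t}{4}$ reduces to $-\tfrac{t}{8}\le\tfrac{m}{3}+\tfrac{2m^2}{9t}$ and is therefore obvious for positive $t,m$. For the ``in particular'' claim, the right-hand side of the first inequality is increasing in $t\in(0,\infty)$ (its derivative equals $\tfrac{5}{8}+\tfrac{2m^2}{9t^2}>0$), so applying it at the maximum possible value $t=n$ gives $n\le\tfrac{2m}{3}+\tfrac{5n}{8}-\tfrac{2m^2}{9n}$; clearing denominators, this is $27n^2-48mn+16m^2\le 0$, a quadratic in $m$ with roots $\tfrac{3n}{4}$ and $\tfrac{9n}{4}$, so combined with $m\le n$ it forces $m\ge\tfrac{3n}{4}$.

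The principal obstacle will be producing exactly the coefficients $\tfrac{2}{3}$, $\tfrac{5}{8}$, and $\tfrac{2}{9}$ in the first inequality. Applying the hypothesis to a single family of $m$-subsets almost certainly loses constants; what should be needed is to play off two (or more) types of $m$-subsets against each other, using both the independence of $A$ and $B$ and the specific structural facts about $A_0$ and $W_0$ provided by Lemma~\ref{l:isolating}. The equality case $K_{n/2,n/2}$ with $m=\tfrac{3n}{4}$, $t=n$, and $A,B$ the two sides of the bipartition serves as the compass for tuning these estimates.
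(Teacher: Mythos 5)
Your proposal does not address the statement it is supposed to prove. The statement at issue is Lemma~\ref{l:isolating} itself: that isolating the unmatched vertices of a maximum independent set $A$ (with respect to a maximum matching $M$ from $V(G)\setminus A$ to $A$) does not change the independence number. What you have written is instead a plan for proving Lemma~\ref{lem:1243}, and it explicitly \emph{invokes} Lemma~\ref{l:isolating} as a black box along the way; relative to the actual target this is vacuous (indeed circular). For the record, the paper offers no proof of this lemma either --- it is quoted verbatim from \cite{LPR2}*{Lemma~2.2} --- but that does not make your submission a proof of it. A genuine proof would be short and purely graph-theoretic: deleting edges gives $\alpha(G')\ge\alpha(G)$ at once, and for the reverse inequality one takes a maximum independent set $J$ of $G'$, notes that one may assume $A\setminus V(M)\subseteq J$ since these vertices are isolated in $G'$, and then plays the maximality of the matching $M$ against the maximality of $A$ to force $|J|\le |A|$. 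None of the local-density hypothesis, the set $B$, or the inequality $n\le\frac{2m}{3}+\frac{5t}{8}-\frac{2m^2}{9t}$ featuring in your write-up is relevant to this.

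Even judged as an attempt at Lemma~\ref{lem:1243}, your text is only a programme, not a proof: you concede that the ``principal obstacle'' of producing the coefficients $\frac23$, $\frac58$ and $\frac29$ is unresolved, and the key averaging steps are described only as things that ``should rearrange into'' the desired bound. (The closing computation that the first inequality forces $m\ge\frac34 n$, via the quadratic $27n^2-48mn+16m^2\le 0$, is correct, but the paper reaches this more simply from the second, weaker inequality $n\le\frac{m}{3}+\frac{3t}{4}$ with $t\le n$.)
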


\begin{proof}[Proof of Lemma~\ref{lem:1243}]
	Pick an independent set $A\subseteq V$ of size $|A|=\alpha(G)$ as well as an 
	independent set $B\subseteq V\sm A$ of size $|B|=\alpha(G-A)$. We shall prove 
	that $t=|A|+|B|$ has the desired property. 
	
	Before reaching this goal we need to establish several weaker estimates involving 
	the cardinalities of $A$ and $B$. First, the local density assumption yields 
	$|E|\ge \frac29 n^2$ and thus there exists a vertex~$x$ whose degree is at 
	least $\frac49 n$. As the neighbourhood of $x$ is independent, 
	we have $|A|\ge \tfrac 49 n>\frac25n$, whence $|A|>\tfrac23(n-|A|)$. 
	Since $e(A)=0<\frac29 m^2$ yields $|A|<m$, we can deduce 
	\begin{equation}\label{eq:1346} 
		(m-|A|)+\frac{2|A|(n-|A|)}{3(m-|A|)}
		>
		(m-|A|)+\frac{4(n-|A|)^2}{9(m-|A|)}
		\ge
		\frac 43(n-|A|)\,,
	\end{equation}
	where the last step utilises the AM-GM inequality. 
	
	Next, for every $W\subseteq V\sm A$ of 
	size $|W|=m-|A|$ we have $e(A\dcup W)\ge \frac29 m^2$, where the symbol ``$\dcup$''
	means ``disjoint union''. By averaging 
	over $W$ we obtain 
	\[
		\tfrac29 m^2
		\le 
		\frac{m-|A|}{n-|A|}e(A, V\sm A)+ \left(\frac{m-|A|}{n-|A|}\right)^2e(V\sm A)\,.
	\]
	Together with the estimates $e(A, V\sm A)\le |A||B|$ and $e(V\sm A)\le (n-|A|-|B|)|B|$,
	which follow from the fact that $|N(x)\sm A|\le \alpha(G-A)=|B|$ holds for every vertex 
	$x\in V$, we conclude 
	\begin{align*}
		\tfrac29 m^2
		&\le
		\frac{m-|A|}{n-|A|}|A||B|+\left(\frac{m-|A|}{n-|A|}\right)^2|B|(n-|A|-|B|) \\
		&=
		\frac{m-|A|}{n-|A|}|B|m-\left(\frac{m-|A|}{n-|A|}\right)^2|B|^2\,,
	\end{align*}
	which rewrites as
	\[
		0
		\le 
		\left(\frac{m-|A|}{n-|A|}|B|-\frac m3\right)
			\left(\frac{2m}3-\frac{m-|A|}{n-|A|}|B|\right)\,.
	\]
	This implies
	\begin{equation}\label{eq:1355}
		\frac{m-|A|}{n-|A|}|B|\ge \frac m3\,,
	\end{equation}
	whence 
	\[
		|A|+2|B|+m
		\ge
		|A|+\frac{2m}3\cdot \frac{n-|A|}{m-|A|}+m
		=
		\frac{2n}3+\frac{4|A|}3+(m-|A|)+\frac{2|A|(n-|A|)}{3(m-|A|)}\,.
	\]
	Due to~\eqref{eq:1346} this proves	
	\begin{equation}\label{eq:1350}
		|A|+2|B|+m 
		>
		2n\,.
	\end{equation}

	Finally, we remark that~\eqref{eq:1355} implies $|B|\ge \tfrac13 n$, for which reason 
	\begin{equation}\label{eq:1357}
		21t=21(|A|+|B|)\ge 21(\tfrac49+\tfrac13)n>16n\ge 12n+4m\,.
	\end{equation}

	After these preparations we are ready for the main argument. Let $M$ be a maximum 
	matching between $B$ and $V\sm (A\dcup B)$. Because of $|M|\le |V\sm (A\dcup B)|$ 
	we have 
	\[
		|B\sm V(M)|
		\ge 
		|B|-(n-|A|-|B|) 
		\overset{\eqref{eq:1350}}{>}
		n-m 
	\]
	and, consequently, there exist disjoint sets $X, Y\subseteq B\sm V(M)$
	such that $|X|=n-m$ and $|Y|=|A|+2|B|+m-2n$. Now $|V\sm X|=m$ entails
	\[
		\frac29 m^2
		\le 
		e(V\sm X)
		=
		\sum_{y\in Y}d(y)+\sum_{a\in A}|N(a)\sm (X\dcup Y)|+e(V-(A\dcup X\dcup Y))\,.
	\]
	These three summands can be estimated as follows. First, we trivially have 
	$d(y)\le \alpha(G)=|A|$ for every $y\in Y$. Second, by Lemma~\ref{l:isolating} 
	applied to $G-A$ here in place of $G$ there we know that the graph $H$ obtained
	from $G-A$ by deleting the edges incident with $X\dcup Y$ satisfies $\alpha(H)=|B|$.
	Now for every $a\in A$ the independence of $N(a)\cup X\cup Y$  in $H$ implies 
	$|N(a)\sm(X\dcup Y)|\le |B|-|X\dcup Y|=n-|A|-|B|$. Third, $|V-(A\dcup X\dcup Y)|=2(n-|A|-|B|)$
	and Mantel's theorem yield $e(V-(A\dcup X\dcup Y))\le (n-|A|-|B|)^2$. So altogether 
	we obtain
	\[
		\tfrac29 m^2
		\le 
		|Y||A|+|A|(n-|A|-|B|)+(n-|A|-|B|)^2\,.
	\]
	Owing to
	\begin{align*}
		4[|Y||A| &+|A|(n-|A|-|B|)]
		=
		4|A|(|B|+m-n) \\
		&=(|A|+|B|)(|A|+|B|+2m-2n)
		-(|A|-|B|)(|A|-|B|-2m+2n)
	\end{align*}
	and $|A|-|B|-2m+2n\ge |A|-|B|\ge 0$ this leads to 
	\[
		\tfrac29 m^2
		\le
		\tfrac14 (|A|+|B|)(|A|+|B|+2m-2n)+(n-|A|-|B|)^2\,.
	\]
	Now we multiply by $144$ and take $t=|A|+|B|$ into account, thus deriving
	\[
		32m^2
		\le 
		36t(t+2m-2n)+144(n-t)^2\,,
	\]
	i.e., 
	\begin{equation}\label{eq:1359}
		(3t-4m)^2
		\le 
		(21t-4m-12n)(9t+4m-12n)\,.
	\end{equation}
	By~\eqref{eq:1357} the first factor on the right side is positive and thus 
	we obtain $9t+4m-12n\ge 0$. Combined with 
	\[
		(21t-4m-12n)+(9t+4m-12n)=30t-24n\le 6t
	\]
	this implies $21t-4m-12n\le 6t$ and substituting this back into~\eqref{eq:1359}
	we learn 
	\[
		(3t-4m)^2\le 6t(9t+4m-12n)\,, 
	\]
	i.e., 
	\[
		n
		\le 
		\tfrac34t+\tfrac13m-\tfrac 1{72t}(3t-4m)^2
		\le
		\tfrac34t+\tfrac13m\,.
	\]

	As the term in the middle simplifies to $\frac23 m+\frac 58 t-\frac{2m^2}{9t}$
	this completes the proof of~\eqref{eq:1925}. Moreover $n\le \frac34n+\frac13m$
	implies $m\ge \frac34n$. 
\end{proof}

\subsection{Edges in neighbourhoods}

In this subsection we study lower bounds on the number of edges spanned by the 
neighbourhood of a vertex in an extremal graph. We commence with the following
variant of a result by X.~Liu and J.~Ma, see~\cite{LM}*{Theorem~1.5(1)}.

\begin{lem}\label{lem:2317}
	Let $m$, $n$, $q$ be positive integers such that $q\ge \frac29m^2$ and $n\ge m$. 
	If a triangle-free graph $G$ has the property that every set $X\subseteq V(G)$
	of size $|X|=m$ spans at least $q$ edges, then $e(G)\ge \frac{nq}{2m-n}$. 
\end{lem}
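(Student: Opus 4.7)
The concluding sentence of Lemma~\ref{lem:1243} converts the hypothesis $q\ge \tfrac29 m^2$ into $m\ge \tfrac34 n$, so $2m-n\ge \tfrac n2>0$, the bound $\tfrac{nq}{2m-n}$ is well-defined, and $\tfrac{nq}{2m-n}\le 2q$; hence it would be enough to prove the seemingly stronger inequality $e(G)\ge 2q$.

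The central step is the following algebraic identity. Given any partition $V(G)=A\dcup B\dcup C$ with $|A|=|B|=n-m$ and $|C|=2m-n$, the two sets $X_1:=A\cup C$ and $X_2:=B\cup C$ are $m$-subsets of $V(G)$ with $X_1\cup X_2=V(G)$ and $X_1\cap X_2=C$. Sorting edges of $G$ by which of $A, B, C$ their endpoints lie in, one verifies
\[
e(X_1)+e(X_2)=e(G)+e(C)-e(A,B),
\]
and the hypothesis $e(X_i)\ge q$ therefore gives $e(G)\ge 2q+e(A,B)-e(C)\ge 2q-e(C)$. The proof thus reduces to finding such a partition with $e(C)=0$, i.e.\ an \emph{independent} set $C\subseteq V(G)$ of size $2m-n$.

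For this, I would invoke Lemma~\ref{lem:1243}: it produces disjoint independent sets whose total size $t$ satisfies $n\le \tfrac m3+\tfrac{3t}{4}$, so that $\alpha(G)\ge\tfrac{t}{2}\ge\tfrac{2(3n-m)}{9}$. This lower bound on $\alpha(G)$ already exceeds $2m-n$ throughout the critical range $m\le \tfrac34 n$ (with equality at $m=\tfrac34 n$), supplying the required independent~$C$ and finishing this case immediately.

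The main obstacle is therefore the residual regime $m>\tfrac34 n$ with $\alpha(G)<2m-n$, where $C$ cannot be chosen independent. A key observation in this regime is that the target inequality rewrites as $e(A,B)-e(C)\ge \tfrac{(3n-4m)q}{2m-n}$ with \emph{non-positive} right-hand side, so some edges inside $C$ are permissible provided they are dominated by edges between $A$ and $B$. I plan to resolve this by building $C$ from sub-pieces of the two independent sets furnished by Lemma~\ref{lem:1243} (so that $e(C)$ only counts edges crossing between the two pieces) and by choosing the partition of $V(G)\setminus C$ greedily to maximize $e(A,B)$, exploiting the sharper estimate $n\le \tfrac{2m}{3}+\tfrac{5t}{8}-\tfrac{2m^2}{9t}$ of that same lemma to control the residual discrepancy.
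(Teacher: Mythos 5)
Your opening reduction is unsound: the inequality $e(G)\ge 2q$ that you propose to prove instead is genuinely false under the hypotheses of the lemma, so no completion of your plan along these lines can succeed. Take $G=K_{n/2,n/2}$, $m=n$, and $q=\tfrac14n^2\ge\tfrac29m^2$; the only $m$-set is $V(G)$ itself and it spans exactly $q$ edges, yet $e(G)=q<2q$ (the lemma's actual claim, $e(G)\ge\tfrac{nq}{2m-n}=q$, holds with equality here). The failure is already visible in your own accounting: an independent set $C$ of size $2m-n$ need not exist, and your ``immediate'' case $m\le\tfrac34n$ is, by the last sentence of Lemma~\ref{lem:1243}, only the single value $m=\tfrac34n$. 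Hence the entire substantive range $\tfrac34n<m\le n$ falls into your ``residual regime'', for which you offer only a plan with no estimates. That plan also meets concrete obstacles: the two independent sets produced by Lemma~\ref{lem:1243} are only guaranteed to have total size $t\ge\tfrac49(3n-m)$, which is smaller than $|C|=2m-n$ once $m$ is close to $n$, so $C$ cannot in general be carved out of them; and even when it can, $e(C)$ may be as large as $(m-\tfrac n2)^2$, which exceeds the allowance $\tfrac{(4m-3n)q}{2m-n}$ for, say, $m=\tfrac45n$, so you would genuinely need a lower bound on $e(A,B)$ that you have not supplied.

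For comparison, the paper proves the lemma by a minimal-counterexample induction on $n$: a vertex $x$ of degree $d<n-m$ is deleted and the induction hypothesis is applied to $(n-1,m-1,q-d)$; if instead the minimum degree is at least $n-m$, averaging $e\bigl(R\dcup(V\sm N(x))\bigr)\ge q$ over $R\subseteq N(x)$ and summing over all vertices yields $2(n-m)e(G)\le n(e(G)-q)$ via $\tfrac{d(y)}{d(x)}+\tfrac{d(x)}{d(y)}\ge 2$. Your identity $e(X_1)+e(X_2)=e(G)+e(C)-e(A,B)$ is correct and could conceivably serve as a starting point, but as written the proof is missing for essentially the whole parameter range.
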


\begin{proof}
	Lemma~\ref{lem:1243} informs us that $m\ge \frac34n$ and for this reason 
	the division by $2m-n$ is permissible. Now, arguing indirectly, we consider 
	a counterexample $(n, m, q, G)$ such that $n$
	is minimal. 
		
	\smallskip
	
	{\it \hskip 1em First Case: There is a vertex $x$ whose degree is less than $n-m$.}
	
	\smallskip
	
	Set $d=d(x)$ and $G'=G-x$. Because of $9d\le 9(n-m-1)\le 12m-9m-9<4m-2$
	we have 
	\[
		q-d
		> 
		\frac{2m^2}9-\frac{4m-2}9
		=
		\frac{2(m-1)^2}9\,.
	\]
	Moreover, every set $X\subseteq V(G')$ of size $|X|=m-1$ satisfies
	$e(X)\ge e(X\dcup\{x\})-d\ge q-d$. For these reasons the quadruple 
	$(n-1, m-1, q-d, G')$ satisfies the assumptions and the minimality of $n$ 
	discloses 
	\[	
		e(G')\ge \frac{(n-1)(q-d)}{2m-n-1}\,.
	\]
	Together with
	\[
		q
		\ge 
		\frac{2m^2-(4m-3n)(5m-3n)}9
		=
		(2m-n)(n-m)
		>
		(2m-n)d
	\]
	this shows
	\begin{align*}
		e(G)
		&=
		e(G')+d
		\ge
		\frac{(n-1)(q-d)}{2m-n-1}+d
		=
		\frac{(n-1)q}{2m-n-1}-\frac{2(n-m)d}{2m-n-1} \\
		&>
		\frac{(n-1)q}{2m-n-1}-\frac{2(n-m)q}{(2m-n-1)(2m-n)}
		=
		\frac{nq}{2m-n}\,,
	\end{align*}
 	contrary to $(n, m, q, G)$ being a counterexample. 
	
	\smallskip
	
	{\it \hskip 1em Second Case: The minimum degree of $G$ is at least $n-m$.}
	
	\smallskip
	
	Consider a vertex $x\in V(G)$. For every set $R\subseteq N(x)$ of size $m+d(x)-n$
	the set $R\dcup (V\sm N(x))$ has size $m$ and thus it spans at least $q$ edges. 
	By averaging over $R$ we infer 
	\[
		q\le e(V\sm N(x))+\frac{m+d(x)-n}{d(x)}e(N(x), V\sm N(x))\,,
	\]
	whence 
	\[
		\frac{n-m}{d(x)}e(N(x), V\sm N(x))\le e(G)-q\,.
	\]
	Summing over $x$ we obtain
	\[
		(n-m)\sum_{x\in V(G)}\frac 1{d(x)}\sum_{y\in N(x)}d(y)
		\le 
		n(e(G)-q)\,.
	\]
	As the left side rewrites as 
	\[
		(n-m)\sum_{xy\in E(G)}\left(\frac{d(y)}{d(x)}+\frac{d(x)}{d(y)}\right)
	\]
	we are led to $2(n-m)e(G)\le n(e(G)-q)$, which implies the desired lower bound
	on $e(G)$. 
\end{proof}

\begin{cor}\label{c:22}
	Let $G$ be an extremal graph on an even number $n$ of vertices. 
	If $x\in V$ has degree $d(x)\ge n/2$, then 
	\[
		e(N(x))\ge \frac{n^2}{18}\cdot \frac{d(x)}{n-d(x)}
	\]
	and there exist two disjoint independent sets $A, B\subseteq N(x)$ such 
	that $|A|+|B|\ge \frac 43d(x)-\frac 29n$.

	Moreover, if $Y\subseteq V(G)$ of size $|Y|\ge \frac12n$ induces no 
	triangles, then
	\[
		e(Y)\ge \frac{n^2}{18}\cdot \frac{|Y|}{n-|Y|}\,.
	\]
\end{cor}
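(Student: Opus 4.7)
The plan is to derive all three conclusions directly from Lemmas~\ref{lem:1243} and~\ref{lem:2317}, exploiting the elementary observation that for any vertex~$x$ of a $K_4$-free graph the induced subgraph $G[N(x)]$ is triangle-free (a triangle inside $N(x)$ would together with~$x$ form a $K_4$), and more generally that any triangle-free induced subgraph $G[Y]$ automatically inherits the sparse-halves hypothesis of~$G$ on all subsets of~$Y$ of size $\tfrac{1}{2}n$.

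I would begin with the final (``moreover'') statement, which is the cleanest. Apply Lemma~\ref{lem:2317} to the triangle-free graph~$G[Y]$ with parameters $m=\tfrac{1}{2}n$ and $q=\tfrac{1}{18}n^2$ (replacing $q$ by $\lceil n^2/18\rceil$ if one insists on integrality; this does not affect the bound). The hypothesis $q\ge \tfrac{2}{9}m^2$ holds with equality; the size condition $|Y|\ge m$ is one of the assumptions; and every $m$-subset of~$Y$ is also an $m$-subset of $V(G)$, hence spans at least $\tfrac{1}{18}n^2$ edges because $G$ is extremal. The lemma then outputs
\[
	e(Y)\ge \frac{|Y|\cdot q}{2m-|Y|}=\frac{n^2}{18}\cdot \frac{|Y|}{n-|Y|},
\]
which is the claim. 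The first displayed inequality of the corollary now follows by specialising to $Y=N(x)$, using $|N(x)|=d(x)\ge \tfrac{1}{2}n$.

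For the assertion about disjoint independent sets $A,B\subseteq N(x)$, I would apply Lemma~\ref{lem:1243} to the same triangle-free graph $G[N(x)]$, again with $m=\tfrac{1}{2}n$. The verification of the $m$-subset hypothesis is identical to the one above, and the right-hand inequality in~\eqref{eq:1925} specialises to $d(x)\le \tfrac{m}{3}+\tfrac{3t}{4}=\tfrac{1}{6}n+\tfrac{3}{4}t$, where $t=|A|+|B|$. Solving for~$t$ gives $t\ge \tfrac{4}{3}d(x)-\tfrac{2}{9}n$, as required. I do not anticipate any genuine obstacle: the entire corollary is a repackaging of the two general lemmas of this subsection with the specific parameters dictated by the sparse-halves hypothesis, and the numerical boundary $\tfrac{1}{18}n^2=\tfrac{2}{9}(n/2)^2$ matches the input threshold of both lemmas exactly.
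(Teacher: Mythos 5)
Your proposal is correct and follows the same route as the paper: both statements about $N(x)$ are obtained by applying Lemma~\ref{lem:2317} and Lemma~\ref{lem:1243} to the triangle-free induced subgraph $G[N(x)]$ with $m=n/2$ and $q=n^2/18$ (noting $q=\tfrac29m^2$), and the ``moreover'' part is the same application to $G[Y]$. The parameter bookkeeping, including solving $d(x)\le\tfrac{m}{3}+\tfrac{3t}{4}$ for $t$, matches the paper exactly.
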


\begin{proof}
	The subgraph $G'$ of $G$ induced by $N(x)$ is triangle-free.
	Now the statements addressing $N(x)$ follow from 
	Lemma~\ref{lem:2317} applied to $n/2$, $d(x)$, $n^2/18$, and $G'$ here in 
	place of $m$, $n$, $q$, and $G$ there and from Lemma~\ref{lem:1243}. 
	Similarly, Lemma~\ref{lem:2317} implies the last claim.
\end{proof}

We proceed with a lower bound on the number of edges spanned by ``small'' sets. 

\begin{lem}\label{lem:2248}
	Let $G$ be an extremal graph on an even number $n$ of vertices. 
	If the size of $X\subseteq V(G)$ belongs to $\bigl[\frac13n, \frac12n\bigr]$, 
	then $e(X)\ge \frac1{18}(3|X|-n)(6|X|-n)$.
\end{lem}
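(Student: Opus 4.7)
Set $m := n/2 - k$, so $m \in [0, n/6]$. The approach is to find an independent set $Y \subseteq V\setminus X$ of size $m$ and form $S := X \dcup Y$. Then $|S| = n/2$, so by extremality $e(S)\ge n^2/18$; since $Y$ is independent and the cut satisfies $e(X,Y)\le |X|\cdot|Y| = km$,
\[
e(X) \;\ge\; e(S) - km \;\ge\; \frac{n^2}{18} - km \;=\; \frac{(3k-n)(6k-n)}{18},
\]
which is exactly the claim. The entire argument therefore reduces to producing such a $Y$, i.e.\ to showing $\alpha(G[V\setminus X]) \ge m$.

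The boundary cases $k \in \{n/3, n/2\}$ are vacuous. For $k \in (n/3, n/2)$ I would argue the independence bound by contradiction. Since $|V\setminus X| = n-k \ge n/2$, averaging the hypothesis ``every $(n/2)$-subset of $V$ spans $\ge n^2/18$ edges'' over all $(n/2)$-subsets of $V\setminus X$ yields
\[
e(V\setminus X) \;\ge\; \frac{(n-k)(n-k-1)}{(n/2)(n/2-1)}\cdot\frac{n^2}{18}.
\]
On the other hand, if $\alpha(G[V\setminus X]) < m$ then $G[V\setminus X]$ is a $K_4$-free graph on $n-k$ vertices with independence number strictly less than $m \le n/6$, and Ramsey--Tur\'an-type bounds for such graphs, in the spirit of the formula for $f_4(n, \delta n)$ quoted earlier, impose an upper bound on $e(V\setminus X)$ that falls strictly below the right-hand side above, yielding the desired contradiction.

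The hard part will be making this Ramsey--Tur\'an step effective for all even $n$, since the cited formula for $f_4$ is asymptotic. My plan to circumvent this is to apply Corollary~\ref{c:22} and Lemma~\ref{lem:1243} to the triangle-free neighbourhood $G[N(v)]$ of a vertex $v$ of maximum degree: combining Corollary~\ref{c:22} with Mantel's theorem on $G[N(v)]$ already delivers the a priori degree cap $\Delta(G) \le 2n/3$, while the two disjoint independent sets in $N(v)$ produced by Corollary~\ref{c:22} should be large enough that, after intersecting with $V\setminus X$ and using $|X| \le n/2$, they contain an independent $m$-subset outright, bypassing the asymptotic Ramsey--Tur\'an inequality.
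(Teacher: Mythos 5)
The reduction to producing an independent $Y\subseteq V\setminus X$ with $|Y|=m=n/2-|X|$ is clean and the arithmetic checks out, but neither of your two routes to such a $Y$ actually closes: the argument has a genuine gap, and the key missing idea is that the paper \emph{never needs $Y$ to be independent}.

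The paper's proof runs a dichotomy. Either there is a set $A\subseteq V\setminus X$ with $|A|=n/2-|X|$ in which every vertex has at most $\tfrac23|X|$ neighbours in $X$ --- then one bounds $e(A)\le\tfrac13|A||X|$ by Tur\'an and $e(A,X)\le\tfrac23|A||X|$ by construction, which together play exactly the role your independent $Y$ would play even though $A$ need not be independent at all --- or else there are at least $n/2$ vertices $b$ of $V\setminus X$ with $|N(b)\cap X|>\tfrac23|X|$; those vertices form a triangle-free set $B$ (a $K_4$ would be forced otherwise), $e(B)\ge n^2/18$ gives a vertex $b_\star$ with $\ge\tfrac29 n>m$ neighbours inside $B$, and that neighbourhood is the independent $W$ you want. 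So your plan captures only the second half of the dichotomy, and the existence of the independent set you need is precisely what is being established in that second branch under an extra hypothesis that you cannot assume holds.

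Your fallback does not rescue this. First, Corollary~\ref{c:22} requires a vertex of degree at least $n/2$, and at this point in the paper's development one only knows $e(G)\ge n^2/18$ (and by averaging over halves, at best $e(G)\gtrsim\tfrac29 n^2$), which gives maximum degree roughly $\tfrac49 n<\tfrac12 n$; the sharper bounds (Lemma~\ref{l:33}) come \emph{after} Lemma~\ref{lem:2248} and indeed rely on it. Second, even granting a vertex $v$ of degree $d\ge n/2$, Corollary~\ref{c:22} only supplies disjoint independent $A,B\subseteq N(v)$ with $|A|+|B|\ge\tfrac43 d-\tfrac29 n$, and after removing the up to $|X|\le n/2$ vertices of $X$ you are left with $|A\setminus X|+|B\setminus X|\ge\tfrac43 d-\tfrac29 n-|X|$; for $\max(|A\setminus X|,|B\setminus X|)\ge m=\tfrac n2-|X|$ you would need $d\ge\tfrac{11}{12}n-\tfrac34|X|$, which for $|X|$ near $n/3$ demands $d$ essentially equal to the cap $\tfrac23 n$ --- far beyond anything guaranteed. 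Third, the quoted formula for $f_4(n,\delta n)$ is asymptotic and stated only for sufficiently small $\delta$, whereas here $\delta=m/(n-|X|)$ can be as large as $1/4$. The dichotomy used in the paper is both simpler and unconditional; without it, your argument does not go through.
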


\begin{proof}
	Suppose first that there is a set $A\subseteq V(G)\sm X$ of size $|A|=\frac12n-|X|$ 
	such that $|N(a)\cap X|\le \frac23|X|$ holds for every $a\in A$. 
	Because of $|A\dcup X|=\frac12n$ we have 
	\[
		\tfrac1{18}n^2
		\le 
		e(A\dcup X)\le e(A)+\tfrac23|A||X|+e(X)\,.
	\]
	Since Tur\'an's theorem and $|A|\le |X|$ imply $e(A)\le \frac13|A|^2\le \frac13|A||X|$,
	this shows indeed that 
	\[
		e(X)
		\ge
		\tfrac1{18}n^2-|A||X|
		=
		\tfrac1{18}(3|X|-n)(6|X|-n)\,.
	\]

	If no such set $A$ exists, then there is a set $B\subseteq V(G)\sm X$
	of size $|B|=\frac12n$ such that every $b\in B$ satisfies $|N(b)\cap X| > \frac23|X|$. 
	If some three vertices $b',b'',b'''\in B$ form a triangle, then 
	\[
		|N(b')\cap X|+|N(b'')\cap X|+|N(b''')\cap X|> 2|X|
	\]
	implies that for some $x\in X$ we have a clique $b'b''b'''x$ of order four in $G$, 
	which is absurd. So~$B$ induces a triangle-free subgraph of $G$. Owing to $e(B)\ge n^2/18$
	there exists some vertex~$b_\star\in B$ 
	such that $|N(b_\star)\cap B|\ge \frac29n>\frac12n-|X|$.
	Pick an arbitrary set $W\subseteq N(b_\star)\cap B$ of size $|W|=\frac12n-|X|$. 
	Together with $b_\star$ any edge connecting two vertices of $W$ would yield
	a triangle whose three vertices are in $B$. So $W$ is independent and 
	\[
		\frac1{18}n^2
		\le 
		e(X\dcup W)
		=
		e(X)+e(W, X)
		\le 
		e(X)+|W||X|\,,
	\]
	as desired. 
\end{proof}

\begin{cor}\label{c:26}
	If $G$ denotes an extremal graph on an even number $n$ of vertices,
	then $\alpha(G)\le \frac13n$.
\end{cor}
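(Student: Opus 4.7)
My plan is to argue by contradiction, simply feeding a putative large independent set into the tools that have already been prepared. Suppose there exists an independent set $I\subseteq V(G)$ with $|I|>\tfrac13n$, and split according to how large $|I|$ is.

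If $\tfrac13 n<|I|\le \tfrac12 n$, then the size of $I$ lies in the interval to which Lemma~\ref{lem:2248} applies. Both factors of its conclusion are strictly positive: indeed $3|I|-n>0$ by the hypothesis on $|I|$, and $6|I|-n>0$ follows trivially. Hence Lemma~\ref{lem:2248} yields $e(I)>0$, contradicting the independence of $I$.

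If instead $|I|>\tfrac12 n$, then since $n$ is even we may pick a subset $I'\subseteq I$ of size exactly $\tfrac12n=\lfloor n/2\rfloor$. This $I'$ is still independent, so $e(I')=0$, but extremality of $G$ forces $e(I')\ge \tfrac1{18}n^2>0$, a contradiction.

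Together these two cases exclude any independent set of size greater than $\tfrac13n$, which is the desired bound $\alpha(G)\le \tfrac13n$. The argument is short and mechanical; there is no real obstacle beyond noticing that the two ranges $|I|\in(\tfrac n3,\tfrac n2]$ and $|I|>\tfrac n2$ are handled by Lemma~\ref{lem:2248} and the extremality assumption respectively, and that in each case the relevant lower bound on the edge count is strictly positive.
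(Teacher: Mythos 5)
Your proof is correct and follows essentially the same route as the paper, which also derives the bound from Lemma~\ref{lem:2248} applied to sets of size in $\bigl(\frac13n,\frac12n\bigr]$. The only cosmetic difference is that you treat independent sets larger than $\frac12n$ as a separate explicit case, whereas the paper leaves this implicit (a subset of an independent set is independent); both are fine.
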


\begin{proof}
	If $X\subseteq V(G)$ satisfies $|X|\in \bigl(\frac13n, \frac12n\bigr]$,
	then Lemma~\ref{lem:2248} yields $e(X)>0$, so $X$ cannot be independent.
\end{proof}

\subsection{Inequalities} In this subsection we prepare a stability analysis
of extremal graphs $G$ satisfying $e(G)\ge \frac7{24} v(G)^2$. This will involve 
the following inequality.  

\begin{fact}\label{fact:2250}
	Define $h\colon \bigl[\frac13, \frac12\bigr]\lra \RR$ by $h(\xi)=(3\xi-1)(6\xi-1)$.
	If $\xi\in \bigl[\frac13, \frac12\bigr]$ and $\gamma\in \bigl[\frac7{24}, \frac13\bigr]$, 
	then 
	\begin{equation}\label{eq:23}
		\frac{h(\xi)^2}\xi
		\ge 
		\frac{\xi(1+2\gamma)-8\gamma^2}{(1-2\gamma)^3}\,.
	\end{equation}
\end{fact}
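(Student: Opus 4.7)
The plan is to collapse the two-parameter inequality to a one-parameter one by monotonicity in $\gamma$, and then verify the resulting polynomial inequality through an explicit sum-of-squares certificate. First I would differentiate the right-hand side of~\eqref{eq:23} with respect to $\gamma$ with $\xi$ held fixed; a short computation gives
\[
\partial_\gamma \frac{\xi(1+2\gamma)-8\gamma^2}{(1-2\gamma)^3} \;=\; \frac{8(1+\gamma)(\xi-2\gamma)}{(1-2\gamma)^4}\,.
\]
Since $\xi\le\tfrac12<\tfrac7{12}\le 2\gamma$ throughout the prescribed domain, this derivative is strictly negative. Hence the right-hand side of~\eqref{eq:23} is maximised at $\gamma=\tfrac7{24}$, and it suffices to establish the inequality at this single value.

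With $\gamma=\tfrac7{24}$ one has $(1-2\gamma)^3=\tfrac{125}{1728}$ and $8\gamma^2=\tfrac{49}{72}$, so clearing denominators reduces~\eqref{eq:23} to
\[
125\,(3\xi-1)^2(6\xi-1)^2 \;\ge\; 2736\xi^2 - 1176\xi\,,\qquad \xi\in\bigl[\tfrac13,\tfrac12\bigr]\,.
\]
The substitution $v=3-6\xi\in[0,1]$ now gives $(3\xi-1)(6\xi-1)=\tfrac12(1-v)(2-v)$ and $2736\xi^2-1176\xi=4(3-v)(8-19v)$, so after expanding both sides the claim becomes the one-variable polynomial inequality
\[
P(v) \;:=\; 125v^4-750v^3+1321v^2-460v+116 \;\ge\; 0\,,\qquad v\in[0,1]\,.
\]

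To finish I would produce a sum-of-squares certificate valid on the whole half-line $v\ge 0$. Matching the top three coefficients of $P$ against those of $\tfrac1{125}(125v^2-375v+c)^2$ forces $c=98$, and the residue then turns out to be linear in $v$ with positive coefficients. A direct expansion confirms the identity
\[
125\,P(v) \;=\; \bigl(125v^2-375v+98\bigr)^2 + 16000\,v + 4896\,,
\]
whose right-hand side is manifestly positive for every $v\ge 0$; this closes the argument. The two conceptual steps are the $\gamma$-monotonicity reduction and the SOS certificate, and the main obstacle is purely bookkeeping: a single arithmetic slip when clearing denominators or matching coefficients would break the identity, so some care is required, but no further ideas are needed.
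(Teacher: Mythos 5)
Your proposal is correct and follows essentially the same route as the paper: both arguments first use the negativity of $\partial_\gamma$ of the right-hand side (valid since $2\gamma\ge\tfrac7{12}>\xi$) to reduce to $\gamma=\tfrac7{24}$, and then verify the resulting one-variable inequality by elementary algebra. The only difference is cosmetic: the paper finishes via $\tfrac{h(\xi)^2}{\xi}\ge 4(h(\xi)-\xi)$ (i.e.\ $(h-2\xi)^2\ge0$) followed by a linear estimate, whereas you clear denominators to a quartic in $v=3-6\xi$ and exhibit the sum-of-squares identity $125P(v)=(125v^2-375v+98)^2+16000v+4896$, which I have checked to be correct.
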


\begin{proof}
	Writing $F(\gamma, \xi)$ for the right side of~\eqref{eq:23} we have 
	\[
		\frac{\partial F(\gamma, \xi)}{\partial \gamma}
		=
		-\frac{8(1+\gamma)(2\gamma-\xi)}{(1-2\gamma)^4}
		<
		0
	\]
	and, in particular, $F(\gamma, \xi)$ is decreasing in $\gamma$. 
	As the left side of~\eqref{eq:23} is nonnegative, this shows that  
	it suffices to prove 
	\begin{equation}
		\frac{h(\xi)^2}\xi
		\ge
		\frac{25}{24} F\bigl(\tfrac 7{24}, \xi\bigr)
		=
		\frac{114\xi-49}{5}\,.
	\end{equation}  
   Due to
   \[
   	100(h(\xi)-\xi)
		=
		2(30\xi-13)^2+(7-10\xi)+5(114\xi-49)
	\]
	we have indeed 
   \[
   	\frac{h(\xi)^2}\xi
		\ge
		4(h(\xi)-\xi)
		>
		\frac{114\xi-49}{5}\,. \qedhere
	\]
\end{proof}

\begin{lem}\label{l:2525}
	Let $G$ be an extremal graph on an even number~$n$ of vertices which 
	has $\gamma n^2$ edges. If $\gamma\ge \frac7{24}$, then 
	\begin{equation}\label{eq:25}
		e(N(x))^2
		\ge
		\frac{d(x) n^2}{324}\cdot \frac{(1+2\gamma)d(x)-8\gamma^2n}{(1-2\gamma)^3}
	\end{equation}
	holds for every vertex $x\in V(G)$.
\end{lem}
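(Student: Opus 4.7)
The plan is to split on $d(x)$ into three regimes, trivialising one of them and handling the other two by combining the lower bounds on $e(N(x))$ from Lemma~\ref{lem:2248} and Corollary~\ref{c:22} with either Fact~\ref{fact:2250} or a short algebraic factorisation.

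First, if $d(x)\le n/3$, the right-hand side of~\eqref{eq:25} is nonpositive: indeed, $\gamma\ge 7/24>1/4$ is equivalent to $24\gamma^2-2\gamma-1\ge 0$ and hence to $\tfrac{8\gamma^2 n}{1+2\gamma}\ge n/3$, so $(1+2\gamma)d(x)-8\gamma^2 n\le 0$ and~\eqref{eq:25} is trivial. From now on I assume $d(x)>n/3$ and set $\xi=d(x)/n$.

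If $d(x)\in\bigl(\tfrac{n}{3},\tfrac{n}{2}\bigr]$, I would apply Lemma~\ref{lem:2248} to $X=N(x)$ to obtain $e(N(x))\ge\tfrac{1}{18}(3d(x)-n)(6d(x)-n)=\tfrac{n^2}{18}h(\xi)$, with $h$ as in Fact~\ref{fact:2250}. Squaring this inequality and then invoking Fact~\ref{fact:2250} in the form $h(\xi)^2\ge\xi\bigl[\xi(1+2\gamma)-8\gamma^2\bigr]/(1-2\gamma)^3$, and finally substituting $\xi=d(x)/n$, yields exactly~\eqref{eq:25}.

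It remains to handle $d(x)>n/2$, where Corollary~\ref{c:22} supplies $e(N(x))\ge\tfrac{n^2\xi}{18(1-\xi)}$. Squaring this bound and cancelling common factors reduces~\eqref{eq:25} to the purely algebraic inequality $\xi(1-2\gamma)^3\ge(1-\xi)^2\bigl[(1+2\gamma)\xi-8\gamma^2\bigr]$, which I need for $\xi\in[\tfrac12,1]$ and $\gamma\in[\tfrac{7}{24},\tfrac{1}{3}]$. Setting $\eta=1-\xi$ and $\sigma=1-2\gamma$ transforms the difference of the two sides into the cubic $(1-\eta)\sigma^3+(2-\sigma)\eta^3-\sigma(3-2\sigma)\eta^2$, and a short expansion verifies the identity $(1-\eta)\sigma^3+(2-\sigma)\eta^3-\sigma(3-2\sigma)\eta^2=(\eta-\sigma)^2\bigl[(2-\sigma)\eta+\sigma\bigr]$, whose right-hand side is manifestly nonnegative throughout our range. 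The step I expect to be least transparent is the discovery of this factorisation; a useful heuristic is that~\eqref{eq:25} should be tight exactly when $d(x)$ equals the average degree $2\gamma n$ of $G$, which corresponds to $\eta=\sigma$ and forces $(\eta-\sigma)^2$ to divide the cubic, thereby pinning down the remaining linear factor.
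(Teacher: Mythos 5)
Your proof is correct and follows essentially the same route as the paper: the same three-way split on $d(x)$, the same use of Lemma~\ref{lem:2248} with Fact~\ref{fact:2250} in the middle range, and your cubic factorisation $(\eta-\sigma)^2\bigl[(2-\sigma)\eta+\sigma\bigr]$ is exactly the paper's identity with numerator $\bigl(d(x)-2\gamma n\bigr)^2\bigl(2n-(1+2\gamma)d(x)\bigr)$ rewritten in the variables $\eta=1-\xi$, $\sigma=1-2\gamma$.
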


\begin{proof}
	If $d(x)<\frac13n$, then because of $1+2\gamma-24\gamma^2=(1-4\gamma)(1+6\gamma)<0$
	the right side of~\eqref{eq:25} cannot be positive and the claim is clear. 
	
	Suppose next that $d(x)=\xi n$ holds for some $\xi\in \bigl[\frac13, \frac12\bigr]$.
	Recall that Tur\'an's theorem implies $\gamma\le \frac13$. Lemma~\ref{lem:2248} yields 
	$e(N(x))\ge \frac1{18}h(\xi)n^2\ge 0$, where $h$ indicates the function studied in 
	Fact~\ref{fact:2250}, and thus we have indeed 
	\[
		\frac{e(N(x))^2}{d(x)}
		\ge
		\frac{n^3}{324}\cdot \frac{h(\xi)^2}{\xi}
		\ge
		\frac{n^2}{324}\cdot \frac{(1+2\gamma)d(x)-8\gamma^2n}{(1-2\gamma)^3}\,.  
	\]

	Finally, if $d(x)\ge \frac12n$, then Corollary~\ref{c:22} yields  
	\[
		e(N(x))^2
		\ge
		\frac{d(x) n^2}{324}\cdot\frac{d(x)n^2}{\bigl(n-d(x)\bigr)^2}
	\]
	and it remains to observe that
	\[
		\frac{d(x) n^2}{\bigl(n-d(x)\bigr)^2}
		-
		\frac{(1+2\gamma)d(x)-8\gamma^2n}{(1-2\gamma)^3}
		=
		\frac{\bigl(d(x)-2\gamma n\bigr)^2\bigl(2n-(1+2\gamma)d(x)\bigr)}
			{\bigl(n-d(x)\bigr)^2(1-2\gamma)^3}
	\]
	is nonnegative because of $\gamma\le\frac12$ and $d(x)\le n$. 
\end{proof}

\section{Two independent sets}

In this section we explore some consequences of the following observation. 

\begin{lem}\label{p:31}
	Suppose that $G=(V, E)$ denotes an extremal graph on an even number~$n$ 
	of vertices. If $A, B\subseteq V$ are two disjoint independent sets, then 
	$e(A, B)\le |E|-\frac29 n^2$.
\end{lem}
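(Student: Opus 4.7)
The plan is to rewrite the claim and attack it through a balanced partition of $V$. Set $C:=V\setminus(A\cup B)$; since $A$ and $B$ are independent we have $|E|=e(A,B)+e(A,C)+e(B,C)+e(C)$, so the lemma reduces to the transparent inequality
\begin{equation}\label{eq:plan-target}
 e(A,C)+e(B,C)+e(C) \;\ge\; \tfrac{2}{9}n^{2}\,.
\end{equation}
Corollary~\ref{c:26} gives $|A|,|B|\le n/3\le n/2$, so $V$ admits a partition into two halves of size $n/2$ with $A$ on one side and $B$ on the other: pick any partition $(Y_{1},Y_{2})$ of $C$ with $|Y_{1}|=n/2-|A|$ and $|Y_{2}|=n/2-|B|$, and set $X_{1}:=A\cup Y_{1}$ and $X_{2}:=B\cup Y_{2}$.

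The sparse-halves hypothesis yields $e(X_{1})+e(X_{2})\ge n^{2}/9$, and a direct edge count shows
\[
 |E|-e(A,B) \;=\; \bigl(e(X_{1})+e(X_{2})\bigr) + R(Y_{1},Y_{2})\,,
\]
with ``residual'' $R(Y_{1},Y_{2}):=e(A,Y_{2})+e(B,Y_{1})+e(Y_{1},Y_{2})$. Hence \eqref{eq:plan-target} will follow the moment we exhibit some partition with $R(Y_{1},Y_{2})\ge n^{2}/9$.

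For the symmetric case $|A|=|B|$ I would sample $(Y_{1},Y_{2})$ uniformly at random among equal-size splits of $C$. A short calculation gives
\[
 \mathbb{E}[R] \;=\; \mathbb{E}[e(X_{1})+e(X_{2})] + \frac{e(C)}{|C|-1} \;\ge\; \tfrac{n^{2}}{9}\,,
\]
so some partition qualifies and the proof closes via the identity above.

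The main obstacle will be the asymmetric case $|A|\ne|B|$: now $|Y_{1}|\ne|Y_{2}|$, the two expectations $\mathbb{E}[R]$ and $\mathbb{E}[e(X_{1})+e(X_{2})]$ cease to agree, and $\mathbb{E}[R]$ may fall below $n^{2}/9$. To handle it I would supplement the averaging bound on $e(X_{1})+e(X_{2})$ with the Tur\'an estimate $e(C)\le |C|^{2}/3$ on the $K_{4}$-free subgraph $G[C]$ (which controls the quadratic $e(C)$ contribution), and with the Ramsey-Tur\'an lower bound $e(A,B)\ge \tfrac{n^{2}(|A|+|B|)}{18|C|}$ furnished by Corollary~\ref{c:22} applied to the bipartite (hence triangle-free) set $A\cup B$ in the regime $|A|+|B|\ge n/2$. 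A short case analysis on the position of $|A|+|B|$ relative to $n/2$ should then upgrade the averaged bound into the required $R\ge n^{2}/9$.
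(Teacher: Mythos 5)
Your symmetric case is correct: for $|A|=|B|$ the identity
$\mathbb{E}[R] = \mathbb{E}[e(X_{1})+e(X_{2})] + \tfrac{e(C)}{|C|-1}$
(over uniformly random balanced splits of $C$) indeed holds, and together with $e(X_{1})+e(X_{2})\ge n^{2}/9$ it gives $|E|-e(A,B)=\mathbb{E}[e(X_{1})+e(X_{2})]+\mathbb{E}[R]\ge 2n^{2}/9$. But the asymmetric case is a genuine gap, and the tools you list do not close it.

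First, notice that your ``reduction'' to $R\ge n^{2}/9$ is not actually a reduction: the sum $e(X_{1})+e(X_{2})+R=|E|-e(A,B)$ is a constant independent of the split, and $e(X_{1})+e(X_{2})\ge n^{2}/9$ always, so asking whether some split has $R\ge n^{2}/9$ is logically equivalent to the original claim $|E|-e(A,B)\ge 2n^{2}/9$. In the symmetric case the extra content is supplied entirely by the identity above; once $|A|\ne|B|$ that identity fails, and one can check
\[
  \mathbb{E}[R]-\mathbb{E}[e(X_{1})+e(X_{2})]
  =\frac{|A|-|B|}{|C|}\bigl(e(A,C)-e(B,C)\bigr)
   +\frac{|C|-(|A|-|B|)^{2}}{|C|(|C|-1)}\,e(C)\,,
\]
which can be negative, so $\mathbb{E}[R]\ge n^{2}/9$ need not hold. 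The auxiliary estimates you cite then point the wrong way: Tur\'an's $e(C)\le|C|^{2}/3$ is an \emph{upper} bound on a quantity you want to make \emph{large} in $K=e(A,C)+e(B,C)+e(C)$, and the Ramsey--Tur\'an bound from Corollary~\ref{c:22} is a \emph{lower} bound on $e(A,B)$, whereas the lemma you are proving is an \emph{upper} bound on $e(A,B)$; neither helps to show $K\ge 2n^{2}/9$.

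The missing idea is to decouple the two averages and then apply convexity. Rather than sampling a single partition $(Y_{1},Y_{2})$ of $C$, average \emph{separately} over arbitrary $Z\subseteq C$ of size $\tfrac12 n-|A|$ and over arbitrary $Z'\subseteq C$ of size $\tfrac12 n-|B|$; this yields
\[
 \frac{n^{2}}{18}\le \frac{\tfrac12 n-|A|}{|C|}\,e(A,C)+\Bigl(\frac{\tfrac12 n-|A|}{|C|}\Bigr)^{2}e(C)\,,
\]
and the analogue for $B$. Dividing each by its own sampling fraction \emph{before} adding gives
\[
 \frac{n^{2}|C|}{9}\Bigl(\frac{1}{n-2|A|}+\frac{1}{n-2|B|}\Bigr)\le e(A,C)+e(B,C)+e(C)=|E|-e(A,B)\,,
\]
and the AM--HM inequality $\tfrac{1}{n-2|A|}+\tfrac{1}{n-2|B|}\ge \tfrac{2}{|C|}$ finishes the proof uniformly in $|A|,|B|$. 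This pre-division-plus-AM--HM step is exactly what your partition-coupled expectation can reproduce only at $|A|=|B|$.
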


\begin{proof}
	Set $C=V\sm (A\dcup B)$ and notice that $|B| < \frac12n$ implies
	$|C| > \frac12n-|A|$. Now every set $Z\subseteq C$ of size $|Z|=\frac12n-|A|$
	satisfies $e(A\dcup Z)\ge \frac1{18}n^2$. Averaging over $Z$ we learn
	\[
		\frac{n^2}{18}
		\le 
		\frac{\frac12n-|A|}{|C|}e(A, C)+\left(\frac{\frac12n-|A|}{|C|}\right)^2e(C)\,,
	\]
	whence 
	\[
		\frac{n^2|C|}{9(n-2|A|)}
		\le 
		e(A, C)+\frac{\frac12n-|A|}{|C|}e(C)\,.
	\]
	This remains valid if we replace $A$ by $B$ and by adding both 
	estimates we obtain 
	\begin{align*}
		\frac{n^2|C|}{9}\left(\frac 1{n-2|A|}+\frac{1}{n-2|B|}\right)
		& \le 
		e(A, C)+e(B, C)+\frac{n-|A|-|B|}{|C|}e(C) \\
		&=
		|E|-e(A, B)\,.
	\end{align*}
	Together with 
	\[
		\frac 1{n-2|A|}+\frac 1{n-2|B|}
		\ge 
		\frac 4{(n-2|A|)+(n-2|B|)}
		=
		\frac 2{|C|}
	\]
	this proves the desired inequality.
\end{proof}

X.~Liu and J.~Ma proved in~\cite{LM}*{Theorem~4.1} that $e(G) > \frac14v(G)^2$ 
holds for every extremal graph $G$. In fact, they even obtained such a result 
with $\frac14$ replaced by $\frac{32}{123}$. Utilising Lemma~\ref{p:31} 
one can go slightly further. 

\begin{lem}\label{l:33}
Every extremal graph $G$ on an even number $n$ of vertices 
has at least $\frac7{24}n^2$ edges. 
\end{lem}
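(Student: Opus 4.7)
Assume, for contradiction, that $\gamma := e(G)/n^2 < \tfrac{7}{24}$. The Liu--Ma bound $\gamma > \tfrac14$ cited above forces the average degree to exceed $n/2$, so some vertex $x$ has $d(x) = \Delta = \delta n$ with $\delta \ge 2\gamma > \tfrac12$. Applying Corollary~\ref{c:22} to $x$ produces disjoint independent sets $A, B \subseteq N(x)$ whose total size $t = \tau n$ satisfies $\tau \ge \tfrac{4\delta}{3} - \tfrac{2}{9}$; combining with $|A|, |B| \le \alpha(G) \le \tfrac{n}{3}$ from Corollary~\ref{c:26} gives $\tau \le \tfrac{2}{3}$ and in particular $\delta \le \tfrac{2}{3}$. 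The plan is to sandwich $e(A, B)$ between the upper bound from Lemma~\ref{p:31} and a lower bound arising from the fact that $A \cup B$ is bipartite.

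\textbf{Sandwiching $e(A,B)$.} Since $A$ and $B$ are independent, $e(A \cup B) = e(A, B)$ and $A \cup B$ is triangle-free. Therefore Lemma~\ref{lem:2248} (when $\tau \in [\tfrac{1}{3}, \tfrac{1}{2}]$) or the triangle-free part of Corollary~\ref{c:22} (when $\tau \ge \tfrac{1}{2}$) yields
\[
e(A, B) \;\ge\; \begin{cases} \tfrac{n^2}{18}(3\tau-1)(6\tau-1), & \tau \in [\tfrac{1}{3}, \tfrac{1}{2}], \\[2pt] \dfrac{n^2}{18}\cdot\dfrac{\tau}{1-\tau}, & \tau \ge \tfrac{1}{2}, \end{cases}
\]
while Lemma~\ref{p:31} gives $e(A, B) \le \gamma n^2 - \tfrac{2}{9}n^2$. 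Since both right-hand sides above are increasing in $\tau$, replacing $\tau$ by the minimum value $\tfrac{4\delta}{3} - \tfrac{2}{9}$ only weakens the conclusion but expresses the resulting lower bound on $\gamma$ as a function of $\delta$ alone.

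\textbf{Two quadratics.} Invoking the trivial inequality $\delta \ge 2\gamma$ now reduces everything to quadratic inequalities in $\gamma$. On the sub-regime $\delta \in [\tfrac{1}{2}, \tfrac{13}{24}]$ (where $\tau \le \tfrac12$), simplification produces $1152\gamma^2 - 570\gamma + 71 \le 0$; its discriminant $570^2 - 4 \cdot 1152 \cdot 71 = -2268$ is negative, so no real $\gamma$ solves it and this sub-regime is excluded altogether. On $\delta \in [\tfrac{13}{24}, \tfrac{2}{3}]$ one arrives at $72\gamma^2 - 45\gamma + 7 \le 0$, whose roots are exactly $\tfrac{7}{24}$ and $\tfrac{1}{3}$, forcing $\gamma \ge \tfrac{7}{24}$ and contradicting the assumption. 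Each reduction additionally requires checking that taking $\delta$ strictly above $2\gamma$ only tightens the corresponding inequality, which follows because the partial derivatives $576\delta - 204$ (on $[\tfrac{1}{2}, \tfrac{13}{24}]$) and $36\gamma - 6$ (for $\gamma > \tfrac{1}{4}$) are positive. The main obstacle is the algebraic bookkeeping of the two piecewise regimes together with the discriminant and root computations; the conceptual content is simply the interplay between Lemma~\ref{p:31}, Lemma~\ref{lem:2248}, and Corollary~\ref{c:22}, all applied to $A \cup B$.
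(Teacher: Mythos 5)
Your proposal is correct and follows essentially the same route as the paper: a high-degree vertex $x$ with $d(x)\ge 2\gamma n$, Corollary~\ref{c:22} to produce $A,B\subseteq N(x)$, Lemma~\ref{p:31} for the upper bound on $e(A,B)$, and Lemma~\ref{lem:2248} resp.\ Corollary~\ref{c:22} for the lower bound, split according to whether $t$ is below or above $\frac12 n$. The only cosmetic difference is that in the first case the paper linearises $2(n^2-9nt+18t^2)$ by completing a square and reaches $\gamma\le\frac14$ directly, whereas you keep the quadratic $1152\gamma^2-570\gamma+71\le 0$ and check its discriminant is negative; the second case yields the identical quadratic $72\gamma^2-45\gamma+7\le 0$.
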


\begin{proof}
	Define $\gamma>0$ by $|E(G)|=\gamma n^2$ and recall that we already know $\gamma > \frac14$.  
	By averaging there exists a vertex $x\in V(G)$ satisfying $d(x)\ge 2\gamma n > \frac12n$.
	Corollary~\ref{c:22} leads to two disjoint independent sets 
	$A, B\subseteq N(x)$ such that the quantity $t=|A|+|B|$ satisfies 
	\begin{equation}\label{eq:33}
		t
		\ge 
		\tfrac43 d(x)-\tfrac 29 n
		\ge 
		(\tfrac 83\gamma-\tfrac 29)n
		>
		\tfrac49n\,.
	\end{equation}

	\smallskip
	
	{\it \hskip 1em First Case: $t<\frac12n$}
	
	\smallskip
	
	For $X=A\dcup B$ Lemma~\ref{lem:2248} yields
	\[
		36e(A, B) 
		\ge
		2(n^2-9nt+18t^2)
		=
		(3n-6t)^2+(18t-7n)n
		\overset{\eqref{eq:33}}{\ge}
		(48\gamma-11)n^2\,.
	\]
	Together with Lemma~\ref{p:31} this tells us $48\gamma-11\le 36\gamma-8$, i.e., 
	$\gamma\le \frac14$, which is absurd.
	
	\smallskip
	
	{\it \hskip 1em Second Case: $t\ge \frac12n$}
	
	\smallskip
	
	By Corollary~\ref{c:22} applied to $Y=A\dcup B$ and by Lemma~\ref{p:31}
	we have 
	\[
		\frac{t}{n-t}
		\le 
		\frac{18 e(A, B)}{n^2}    
		\le 
		18\gamma-4\,,
	\]
	whence 
	\[
		9n
		\le 
		9(n-t)(18\gamma-3)
		\overset{\eqref{eq:33}}{\le} 
		(11-24\gamma)(18\gamma-3)n\,,
	\]
	i.e.,
	\[
		0
		\le 
		(11-24\gamma)(6\gamma-1)-3
		=
		2(1-3\gamma)(24\gamma-7)\,,
	\]
	which proves that $\gamma\in\bigl[\frac7{24}, \frac13\bigr]$.
\end{proof}

We conclude this section with a closely related result. 

\begin{lem}\label{l:37}
	Let $G$ be an extremal graph on an even number $n$ of vertices with $\gamma n^2$ edges. 
	If $A, B\subseteq V(G)$ are two disjoint independent sets, then $|A|+|B|\le 2\gamma n$. 	
	Moreover, the maximum degree of $G$ is at most $(\frac32\gamma+\frac16)n$. 
\end{lem}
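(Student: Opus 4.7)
The plan is to combine the upper bound on $e(A,B)$ coming from Lemma~\ref{p:31} with a matching lower bound obtained from the extremality of $G$. Set $t=|A|+|B|$. Since $A$ and $B$ are each independent, the induced subgraph $G[A\dcup B]$ is triangle-free (any triangle would place two vertices in the same class) and carries $e(A,B)$ edges. Lemma~\ref{p:31} gives $e(A,B)\le (\gamma-\tfrac29)n^2$.

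For the first assertion I would dispense with the easy case $t\le n/2$ first: by Lemma~\ref{l:33} we have $\gamma\ge 7/24$, hence $2\gamma n\ge 7n/12>n/2\ge t$ and the inequality $t\le 2\gamma n$ is automatic. For the remaining case $t>n/2$, I would apply the last clause of Corollary~\ref{c:22} to $Y=A\dcup B$, obtaining
\[
	e(A,B)=e(Y)\ge \frac{n^2}{18}\cdot\frac{t}{n-t}\,.
\]
Combined with Lemma~\ref{p:31} this produces $\frac{t}{n-t}\le 18\gamma-4$, i.e.\ $t\le\frac{18\gamma-4}{18\gamma-3}\,n$, and the desired bound $t\le 2\gamma n$ reduces to the identity $36\gamma^{2}-24\gamma+4=4(3\gamma-1)^{2}\ge 0$.

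The moreover statement follows from the first part via Corollary~\ref{c:22}. Let $x$ be a vertex of maximum degree and set $d=d(x)$. If $d<n/2$, then because $\gamma\ge 7/24$ one has $\tfrac32\gamma+\tfrac16\ge \tfrac{29}{48}>\tfrac12$ and the bound is immediate. Otherwise Corollary~\ref{c:22} produces two disjoint independent sets $A,B\subseteq N(x)$ (still independent in $G$) with $|A|+|B|\ge \tfrac43 d-\tfrac29 n$; feeding this into the first part yields $\tfrac43 d-\tfrac29 n\le 2\gamma n$, which rearranges to $d\le(\tfrac32\gamma+\tfrac16)n$.

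Overall the argument is rather short once Lemma~\ref{p:31} and Corollary~\ref{c:22} are in place; there is no real obstacle, only the mild observation that the case $t\le n/2$ of the first claim collapses trivially for $\gamma\ge 7/24$, so the only genuine work lies in handling $t>n/2$, where everything comes down to the square $(3\gamma-1)^{2}\ge 0$.
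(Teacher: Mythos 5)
Your proposal is correct and follows essentially the same route as the paper: both parts rest on combining Lemma~\ref{p:31} with Corollary~\ref{c:22} applied to $Y=A\dcup B$ (respectively to $N(x)$), leading to the same inequality $36\gamma^2-24\gamma+4\ge 0$; the paper merely phrases the first part as a contradiction argument starting from $t>2\gamma n\ge\frac12 n$ instead of your direct two-case computation.
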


\begin{proof}
	Assume for the sake of contradiction that $t=|A|+|B|$ satisfies $t>2\gamma n\ge \frac12n$.
	As in the second case of the previous proof one obtains 
	$n\le (n-t)(18\gamma-3)<(1-2\gamma)(18\gamma-3)n$, which yields the 
	contradiction $(6\gamma-2)^2=1-(1-2\gamma)(18\gamma-3)<0$.
	
	Now for every vertex $x\in V(G)$ with $d(x)\ge \frac 12n$ Corollary~\ref{c:22} 
	yields $2\gamma n\ge \frac43 d(x)-\frac29n$, whence $d(x)\le (\frac32\gamma+\frac16)n$. 
	Due the $\frac32\gamma+\frac16>\frac 12$ the claim on the maximum degree of $G$ follows.
\end{proof}
\section{Three independent sets}

We proceed with a stability result addressing extremal graphs. 
With some additional assumptions on the degree distribution this 
was obtained earlier by X.~Liu and J.~Ma in~\cite{LM}*{Lemma~4.10}.

\begin{lem}\label{l:41}
	Suppose that $G=(V, E)$ is an extremal graph on an even number $n$ of vertices. 
	If $|E|=\gamma n^2$, then there 
	are three mutually disjoint independent sets $V_1, V_2, V_3\subseteq V(G)$
	such that 
	\[
		|V_1|+|V_2|+|V_3|
		\ge
		\frac{n}{3(1-2\gamma)}\,.
	\] 
\end{lem}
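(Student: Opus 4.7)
My plan is to extract two independent sets from the neighbourhood of a high-degree vertex via Corollary~\ref{c:22}, and then to produce a third independent set within the complement of that neighbourhood.

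By Lemma~\ref{l:33} we have $\gamma \ge 7/24$, so any vertex $x$ of maximum degree satisfies $d(x) \ge 2\gamma n \ge 7n/12 > n/2$. Since $G[N(x)]$ is triangle-free, Corollary~\ref{c:22} yields disjoint independent sets $V_1, V_2 \subseteq N(x)$ with $|V_1|+|V_2| \ge \tfrac{4}{3}d(x) - \tfrac{2}{9}n$. I then set $V_3 = \{x\} \cup I$, where $I$ is a maximum independent set of $G[W]$ with $W = V(G) \setminus N(x) \setminus \{x\}$. Since $x$ has no neighbour in $W$, the set $V_3$ is independent and disjoint from $V_1 \cup V_2 \subseteq N(x)$.

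The crux is to lower-bound $\alpha(G[W])$ in the required range. My approach is to exploit the extremality condition $e(X) \ge n^2/18$ for every $n/2$-subset $X \subseteq V(G)$: taking $X = W \cup Z$ with $Z \subseteq N(x)$ chosen uniformly at random of size $n/2 - |W|$ and averaging over $Z$ yields an inequality relating $e(W)$, $e(W, N(x))$, and $e(N(x))$. The lower bound $e(N(x)) \ge \frac{d(x)n^2}{18(n-d(x))}$ from Corollary~\ref{c:22}, Lemma~\ref{p:31}'s bound $e(V_1, V_2) \le (\gamma-\tfrac{2}{9})n^2$, and Lemma~\ref{l:37}'s pincer $d(x) \le (\tfrac{3}{2}\gamma + \tfrac{1}{6})n$ together with $|V_1|+|V_2| \le 2\gamma n$ should jointly constrain the edge distribution inside and around $W$.

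With this control in hand, the strategy is to locate in $W$ a substantial triangle-free induced subgraph---most naturally the $G$-neighbourhood of a high-degree vertex $y \in W$, which is triangle-free because $G$ is $K_4$-free---and apply Corollary~\ref{c:22} or Lemma~\ref{lem:1243} there to extract a large independent subset of $W$. Combining the resulting lower bound on $|V_3| = 1 + \alpha(G[W])$ with $|V_1|+|V_2| \ge \tfrac{4}{3}d(x) - \tfrac{2}{9}n$ and optimising over $d(x)$ in the interval $[2\gamma n, (\tfrac{3}{2}\gamma + \tfrac{1}{6})n]$ should deliver $|V_1|+|V_2|+|V_3| \ge n/(3(1-2\gamma))$. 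The step I expect to be hardest is the extraction of the large independent set inside $W$: unlike $N(x)$, the set $W$ is only guaranteed to induce a $K_4$-free graph, so the Ramsey--Tur\'an tools of Section~2 do not apply to it directly, and a workaround via a triangle-free subgraph of $W$ (or via iteration of Corollary~\ref{c:22} at a second well-chosen vertex) seems necessary.
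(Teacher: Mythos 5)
Your plan is a different construction from the paper's and, as you yourself anticipate, it runs into a genuine gap at the step you flag as the hardest: lower-bounding $\alpha(G[W])$ for $W = V \setminus N(x) \setminus \{x\}$. Let me make the difficulty concrete. With $\gamma = \tfrac{7}{24}$ and a maximum-degree vertex $x$ with $d(x) = 2\gamma n = \tfrac{7}{12}n$, Lemma~\ref{lem:1243} (even with its sharper middle bound) gives $|V_1|+|V_2| \gtrsim 0.56\,n$, and $|W| = (1-2\gamma)n - 1 \approx 0.417\,n$, while the target is $\tfrac{n}{3(1-2\gamma)} = \tfrac45 n$. So you would need $\alpha(G[W]) \gtrsim 0.24\,n$, i.e.\ nearly $58\%$ of $W$ (and, by Corollary~\ref{c:26}, over $70\%$ of the absolute ceiling $\alpha(G)\le n/3$). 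Nothing in the toolkit supplies this: $W$ is only known to be $K_4$-free, so Turán-type reasoning gives $\alpha(G[W]) \ge |W|/3$ at best, and the extremality condition $e(X)\ge n^2/18$ pushes $e(W)$ \emph{up}, not down, so it does not help either. Your proposed workaround -- looking at $N(y)\cap W$ for a high-degree $y\in W$ -- is also problematic, because there is no guarantee that a vertex of $W$ has many neighbours inside $W$ (indeed in the Turán graph every vertex of $W$ has \emph{all} its neighbours outside $W$). The heuristic that Lemmas~\ref{p:31} and~\ref{l:37} "jointly constrain the edge distribution around $W$" would have to be turned into a specific inequality forcing a near-maximal independent set in $W$, and it is not at all clear this can be done without essentially proving the main theorem first.

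The paper avoids this altogether by a global triangle-counting argument. Using Lemma~\ref{l:2525} (which you do not invoke) to lower-bound $t(x)^2 = e(N(x))^2$ for \emph{every} vertex $x$, combined with two applications of Cauchy--Schwarz, one obtains
$\sum_{xy\in E}t_{xy}^2 \ge \tfrac{n}{9(1-2\gamma)}\sum_{xy\in E}t_{xy}$. Since $\sum_{xy\in E}t_{xy}=3|T|$, this yields a single triangle $xyz$ with $t_{xy}+t_{yz}+t_{zx} \ge \tfrac{n}{3(1-2\gamma)}$, and the three sets $N(y)\cap N(z)$, $N(z)\cap N(x)$, $N(x)\cap N(y)$ are automatically disjoint and independent because $G$ is $K_4$-free. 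The bound $\tfrac{n}{3(1-2\gamma)}$ drops out cleanly from this averaging, with no need to control an independence number inside a $K_4$-free remainder. Your local construction starting from one high-degree vertex cannot reproduce this without additional structural input that the proof at this stage does not yet possess.
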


\begin{proof}
	Recall that Lemma~\ref{l:33} yields $\gamma\ge\frac 7{24}$.
	For every edge $xy\in E$ we denote the number of triangles containing it by $t_{xy}$.
	Similarly, for every vertex $x$ we write $t(x)$ for the number of triangles containing $x$
	or, in other words, for the number of edges induced by $N(x)$.
	
	For every non-isolated vertex $x\in V$ the Cauchy-Schwarz inequality implies 
	\[
		\sum_{y\in N(x)}t_{xy}^2
		\ge
		\frac{4t(x)^2}{d(x)}
	\]
	and in view of Lemma~\ref{l:2525} we obtain
	\begin{equation}\label{eq:3242}
		\sum_{y\in N(x)}t_{xy}^2
		\ge 
		\frac{n^2}{81}\cdot \frac{(1+2\gamma)d(x)-8\gamma^2n}{(1-2\gamma)^3}\,.
	\end{equation}
	This estimate clearly holds for isolated vertices $x\in V$ as well. 
	Summing~\eqref{eq:3242} over all vertices~$x$ we learn
	\[
		\sum_{xy\in E}t_{xy}^2
		\ge 
		\frac{n^2}{162} \cdot \frac{2(1+2\gamma)\gamma n^2-8\gamma^2n^2}{(1-2\gamma)^3}
		=
		\frac{\gamma n^4}{81(1-2\gamma)^2}\,.
	\]
	Multiplying by 
	\[
		\gamma n^2\cdot \sum_{xy\in E}t_{xy}^2
		\ge
		\left(\sum_{xy\in E}t_{xy}\right)^2
	\]
	and simplifying we deduce 
	\[
		\sum_{xy\in E}t_{xy}^2 
		\ge 
		\frac n{9(1-2\gamma)} \cdot \sum_{xy\in E}t_{xy}\,.
	\]

	Let $T$ denote the set of triangles in $G$. Due to $3|T|=\sum_{xy\in E}t_{xy}$
	the preceding estimate discloses 
	\[
		\sum_{xyz\in T}\bigl(t_{xy}+t_{yz}+t_{zx}\bigr)
		\ge
		\frac {n|T|}{3(1-2\gamma)}
	\]
	and thus there exists some triangle $xyz\in T$ such that 
	\[	
		t_{xy}+t_{yz}+t_{zx}
		\ge 
		\frac{n}{3(1-2\gamma)}\,.
	\]
	Now $V_1=N(y)\cap N(z)$, 
	$V_2=N(z)\cap N(x)$, and $V_3=N(x)\cap N(y)$ are the desired independent sets. 
\end{proof}

The next result occurs implicitly in the work of X.~Liu and J.~Ma,
or more precisely in their discussion of Case~1 in the proof of~\cite{LM}*{Theorem~4.9}. 
For the sake of completeness we include the short argument. 

\begin{lem}\label{l:42}
	Let $G=(V, E)$ be an extremal graph on an even number $n$ of vertices. 
	If a partition $V=V_1\dcup V_2\dcup V_3\dcup Z$ has the properties that 
	$V_i$ is independent and  $|V_i|+|V_{i+1}|\ge \frac12n$ 
	holds for every $i\in \ZZ/3\ZZ$, then  
	\[
		|E|\ge \frac{n(n^2+n|Z|+|Z|^2)}{3(n+2|Z|)}\,.
	\]
\end{lem}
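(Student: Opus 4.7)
The plan is to derive two families of sparse-halves inequalities and combine them. Write $v_i = |V_i|$, $z = |Z|$, $e_{ij} = e(V_i, V_j)$, $f_i = e(V_i, Z)$, and $g = e(Z)$, so that $|E(G)| = \sum_{i<j} e_{ij} + \sum_i f_i + g$. First I would note that summing $|V_i| + |V_{i+1}| \geq n/2$ cyclically gives $|V_1| + |V_2| + |V_3| \geq 3n/4$, whence $z \leq n/4$ and $v_i + z \leq n/2$ for every $i$; in particular, $V_i \cup Z$ may be enlarged inside $V(G)$ to a set of size $n/2$.

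For each ordered pair $(i, j)$ with $i \neq j$, the condition $v_i + v_j \geq n/2$ permits us to consider $X = V_i \cup Y$ with $Y$ a uniformly random subset of $V_j$ of size $n/2 - v_i$. Since $V_i$ and $V_j$ are independent, the only edges in $X$ are those between $V_i$ and $Y$, so averaging yields $(n/2 - v_i)\,e_{ij} \geq v_j n^2/18$. Summing over the six ordered pairs (using convexity of $x \mapsto x/(n/2 - x)$ to reduce to the symmetric configuration $v_1 = v_2 = v_3 = (n-z)/3$) gives the bipartite estimate $\sum_{i<j} e_{ij} \geq (n-z)n^2/(3(n+2z))$.

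Next, for each $i \in \ZZ/3\ZZ$ I would apply sparse halves to $X_i = V_i \cup Z \cup U_i$, where $j, k$ denote the other two indices and $U_i$ is a uniformly random subset of $V_j \cup V_k$ of size $m_i := n/2 - v_i - z \geq 0$. A routine calculation, using the independence of the three sets $V_1, V_2, V_3$, yields
\[
  f_i + g + \frac{m_i}{v_j + v_k}\bigl(e_{ij} + e_{ik} + f_j + f_k\bigr) + \frac{m_i(m_i - 1)}{(v_j + v_k)(v_j + v_k - 1)}\,e_{jk} \ \geq\ \frac{n^2}{18}.
\]
Summing over $i \in \ZZ/3\ZZ$ and combining with the first family via a judicious nonnegative linear combination --- chosen so that the coefficients on $\sum_{i<j} e_{ij}$, $\sum_i f_i$, and $g$ all coincide --- should then collapse to the desired bound $|E(G)| \geq \frac{n(n^2 + nz + z^2)}{3(n+2z)}$.

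The main obstacle I anticipate is this final combination step. A naive symmetric combination of the two families produces only the weaker estimate $|E(G)| \geq n^2(n+z)/(3(n+2z))$, so closing the remaining gap of $nz^2/(3(n+2z))$ seems to require using the exact (rather than the asymptotic) form of the second-order hypergeometric coefficient $m_i(m_i - 1)/[(v_j + v_k)(v_j + v_k - 1)]$, together with solving a small linear system to determine the correct weights. A convexity-based symmetrization to the case $v_1 = v_2 = v_3$ is the natural reduction, but one must verify that it respects the hypothesis $v_i + v_{i+1} \geq n/2$.
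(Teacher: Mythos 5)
You have correctly identified the danger spot yourself: the ``judicious nonnegative linear combination'' does not exist, and this is not a technicality but the actual content of the lemma. Write $E_2=\sum_{i<j}e_{ij}$, $F=\sum_i f_i$, $g=e(Z)$ and work in your symmetric configuration, so that $v=\frac{n-z}{3}$, $m=\frac n6-\frac{2z}{3}$, and your two families reduce to the constraints $E_2\ge A:=\frac{(n-z)n^2}{3(n+2z)}$ and $\alpha E_2+\beta F+3g\ge B:=\frac{n^2}{6}$ with $\alpha=\frac mv+\frac{m(m-1)}{2v(2v-1)}$ and $\beta=1+\frac mv<3$. The point $(E_2,F,g)=(A,0,0)$ satisfies both constraints for all small $z$ (at $z=0$ one has $\alpha\approx\frac9{16}$, so $\alpha A\approx\frac{3n^2}{16}>B$), yet its objective value $A$ falls short of the target by $\frac{nz(2n+z)}{3(n+2z)}$. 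Hence no nonnegative combination of your inequalities (even together with $F,g\ge0$ and Tur\'an's $g\le\frac13z^2n^2$) can imply the claimed bound. Your proposed rescue via the exact hypergeometric coefficient is also quantitatively hopeless: $\frac{m(m-1)}{(v_j+v_k)(v_j+v_k-1)}$ differs from $\frac{m^2}{(v_j+v_k)^2}$ by $O(1/n)$, which after multiplication by $e_{jk}=O(n^2)$ moves the bound by $O(n)$, whereas the missing quantity is of order $z^2$ and, within this lemma, $|Z|$ is only known to satisfy $|Z|\le\frac n4$.

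The paper closes the gap with two ingredients absent from your plan. First, for the pairwise terms it does not use one-step averaging but Corollary~\ref{c:22}, i.e.\ $e(V_i,V_{i+1})\ge\frac{n^2}{18}\cdot\frac{|V_i|+|V_{i+1}|}{n-|V_i|-|V_{i+1}|}$, which rests on the inductive Lemma~\ref{lem:2317}; these three bounds are then merged by a \emph{weighted} Cauchy--Schwarz step into a lower bound on $\sum_i(|V_i|+|V_{i+1}|)\,e(V_i,V_{i+1})$ rather than on the unweighted sum. Second, its analogue of your second family averages over $Q\subseteq V_j$ for a single class $j$ (so no $e(U_i)$ term appears), and summing the six resulting inequalities gives $e(V_i,V_{i+1})$ the coefficient $|V_{i-1}|-|Z|$ and $e(V_i,Z)$ the coefficient $n-2|Z|$. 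Adding the weighted first bound raises the former to $(|V_{i-1}|-|Z|)+(|V_i|+|V_{i+1}|)=n-2|Z|$ as well, so everything collapses to $(n-2|Z|)|E|+n\,e(Z)$, and the leftover $n\,e(Z)\le\frac13 n|Z|^2$ is exactly what produces the $|Z|^2$ term in the statement. This precise matching of coefficients has no counterpart in your unweighted, symmetrized set-up, and recreating it is where the real work lies.
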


\begin{proof}
	Define the real numbers $\gamma$ and $z$ such that $|E|=\gamma n^2$ and $|Z|=zn$. 
	Owing to $|V_1|+|V_2|+|V_3|\ge \frac34n$ we have $z\le \frac14$.
	For every $i\in \ZZ/3\ZZ$ we can apply Corollary~\ref{c:22} to 
	the set $Y=V_i\dcup V_{i+1}$, thus inferring 
	\[
		e(V_i, V_{i+1})\ge \frac{n^2}{18}\cdot \frac{|V_i|+|V_{i+1}|}{n-|V_i|-|V_{i+1}|}\,.
	\]
	Due to the Cauchy-Schwarz inequality, this implies 
	\begin{align*}
		& \phantom{\ge}  (n+2|Z|) \sum_{i\in\ZZ/3\ZZ} (|V_i|+|V_{i+1}|) \,e(V_i, V_{i+1}) \\
		&\ge 
		\frac{n^2}{18} \sum_{i\in\ZZ/3\ZZ} (n-|V_i|-|V_{i+1}|) \times 
		\sum_{i\in\ZZ/3\ZZ} \frac{(|V_i|+|V_{i+1}|)^2}{n-|V_i|-|V_{i+1}|} \\
		&\ge
		\frac{n^2}{18} \Bigl(\sum_{i\in\ZZ/3\ZZ} (|V_i|+|V_{i+1}|)\Bigr)^2
		=
		\frac{2n^2}{9}(n-|Z|)^2\,,
	\end{align*}
	whence
	\begin{equation}\label{eq:41}
		\sum_{i\in\ZZ/3\ZZ} (|V_i|+|V_{i+1}|)\, e(V_i, V_{i+1})
		\ge
		\frac{2n^3(1-z)^2}{9(1+2z)}\,.
	\end{equation}
		
	Let us now consider any two distinct indices $i, j\in \ZZ/3\ZZ$. 
	In view of ${|V_{i-1}|+|V_{i+1}|\ge \frac12n}$ we 
	have $|V_i|+|Z|\le \frac12n$. Moreover, 
	every set $Q\subseteq V_j$ of size $|Q|=\tfrac 12n-|V_i|-|Z|$ 
	satisfies 
	\[
		\frac{n^2}{18}
		\le 
		e(V_i\dcup Q\dcup Z)
		=
		e(V_i, Z)+e(Z)+e(V_i, Q)+e(Q, Z)\,.
	\]
	Multiplying by $|V_j|$ and averaging over $Q$ we learn 
	\[
		\frac{n^2|V_j|}{18}
		\le 
		|V_j|\bigl(e(V_i, Z)+e(Z)\bigr)
		+(\tfrac12 n-|V_i|-|Z|)\bigl(e(V_i, V_j)+e(V_j, Z)\bigr)\,.
	\]
	As $i$ and $j$ vary there arise six estimates of this form whose sum simplifies to
	\[
		\frac{n^3(1-z)}{9}
		\le
		\sum_{i\in\ZZ/3\ZZ}(|V_{i-1}|-|Z|)e(V_i, V_{i+1})
		+
		(n-2|Z|)\sum_{i\in\ZZ/3\ZZ}e(V_i, Z)
		+2(n-|Z|)e(Z)\,.
	\]
	Adding~\eqref{eq:41} and taking into account that Tur\'an's theorem yields 
	$e(Z)\le \frac13|Z|^2$ we obtain
	\[
		\frac{n^3(1-z)}{9}+\frac{2n^3(1-z)^2}{9(1+2z)}
		\le
		\frac{n^3z^2}{3}+(1-2z)\gamma n^3\,.
	\]
	Since $z\le\frac14<\frac12$, this is equivalent to 
	\[
		\gamma\ge \frac{1+z+z^2}{3(1+2z)}\,. \qedhere
	\]
\end{proof}

Our last preparatory result analyses partitions of extremal graphs into 
three ``almost independent'' sets. 

\begin{lem}\label{l:43}
	Let $G=(V, E)$ be an extremal graph on an even number $n$ of vertices. 
	If a partition $V=A_1\dcup A_2\dcup A_3$ satisfies 
	$e(A_1)+e(A_2)+e(A_3)=\omega n^2$ for some $\omega\le \frac1{60}$,
	then $|E|\ge (\frac13-\frac{29}{18}\omega)n^2$.
\end{lem}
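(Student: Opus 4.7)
My plan is to derive a single master linear inequality via averaging over suitable half-sized subsets and then verify it on the restricted range of feasible partitions.

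For each ordered pair $(i,j)$ of distinct indices, let $X_{ij}\subseteq V$ be a uniformly random $(n/2)$-subset formed by taking all of~$A_i$ together with a uniformly chosen $(n/2-|A_i|)$-subset of~$A_j$. The assumption $\omega\le 1/60$ together with Lemma~\ref{lem:2248} applied to each $A_\ell$ yields $|A_\ell|<n/2$ (otherwise $e(A_\ell)\ge n^2/18>\omega n^2$), so these subsets are well-defined. Averaging the hypothesis $e(X_{ij})\ge n^2/18$ over the random choice and writing $a_\ell:=|A_\ell|/n$ and $\theta_{ij}:=(\tfrac12-a_i)/a_j$, one obtains
\[
	e(A_i) + \theta_{ij}^2\, e(A_j) + \theta_{ij}\, e(A_i, A_j) \ge \tfrac{n^2}{18} - O(n).
\]
Summing these six inequalities with weights $1/\theta_{ij}$ and invoking the identities $\sum_{j\ne i} 1/\theta_{ij}=\phi(a_i)$ and $\sum_{i\ne j}\theta_{ij}=1$ for each fixed~$j$, where $\phi(a):=(1-a)/(\tfrac12-a)$, the cross terms collapse and one arrives at
\[
	2\sum_{i<j} e(A_i,A_j) + \sum_i \phi(a_i)\,e(A_i) + \omega n^2 \ge \tfrac{n^2}{18}\sum_i\phi(a_i).
\]
Substituting $|E|=\omega n^2+\sum_{i<j}e(A_i,A_j)$ and bounding $\sum_i\phi(a_i)e(A_i)\le(\max_i\phi(a_i))\,\omega n^2$ yields the master inequality
\[
	|E|\ge \tfrac{n^2}{36}\sum_i\phi(a_i)+\tfrac{\omega n^2}{2}\bigl(1-\max_i\phi(a_i)\bigr).
\]

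To conclude $|E|\ge (1/3-29\omega/18)n^2$, it remains to verify
\[
	\tfrac{1}{36}\sum_i\phi(a_i)-\tfrac13\ge \tfrac{\omega}{2}\bigl(\max_i\phi(a_i)-1\bigr)-\tfrac{29\omega}{18}.
\]
Since~$\phi$ is convex on $[0,1/2)$, Jensen's inequality gives $\sum\phi(a_i)\ge 3\phi(1/3)=12$, so the left-hand side is non-negative. If $\max_i a_i\le 10/29$, then $\max_i\phi(a_i)\le 38/9$ and the right-hand side is non-positive, so the inequality holds trivially. Otherwise, Lemma~\ref{lem:2248} applied to the largest part (of size $\ge n/3$) combined with $\omega\le 1/60$ forces $\max_i a_i\le \alpha$ for some absolute constant $\alpha<0.41$; on the compact region carved out in the $(a_1,a_2,a_3)$-simplex, the inequality reduces to a direct calculus check.

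The main obstacle lies in the final verification in the unbalanced regime: when one~$a_i$ approaches the upper boundary~$\alpha$, the quantity $\max_i\phi(a_i)$ grows to roughly~$6.3$, making the right-hand side substantial, and the inequality survives only because the restricted range simultaneously forces $\sum_i\phi(a_i)$ to be correspondingly large. Quantifying this trade-off on the compact set carved out by Lemma~\ref{lem:2248} is the crux of the argument and explains the somewhat delicate constant $29/18$.
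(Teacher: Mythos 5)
Your master inequality is correctly derived and your reduction of the problem is sound, but the proof is incomplete exactly where you yourself locate the crux: the verification of
\[
\tfrac{1}{36}\textstyle\sum_i\phi(a_i)-\tfrac13\ \ge\ \omega\bigl(\tfrac12\max_i\phi(a_i)-\tfrac{19}{9}\bigr)
\]
on the region $10/29<\max_i a_i\le\alpha$ is never carried out, and it cannot be waved away as ``a direct calculus check.'' Fixing $t=\max_i a_i$ and balancing the other two parts (the worst case by convexity of $\phi$), with $\omega=1/60$, the difference of the two sides has a minimum of roughly $3.7\cdot10^{-4}$ near $t\approx 0.352$ --- a relative margin of about $0.1\%$. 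So the inequality is true and your route is salvageable, but establishing it is genuinely the hardest part of the lemma and is missing. Two smaller issues: the $-O(n)$ in your averaging step is both unnecessary and harmful --- the expectation identity $\mathbb{E}\,e(T)=\frac{s(s-1)}{|A_j|(|A_j|-1)}e(A_j)\le\theta_{ij}^2e(A_j)$ gives the inequality exactly, and you need the exact form because the lemma is not an asymptotic statement; and you should note that the averaging requires $|A_j|\ge\frac n2-|A_i|$, which follows from $\max_\ell|A_\ell|<\frac n2$.

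For comparison, the paper starts from the same six averaged inequalities but combines them differently: instead of linear weights $1/\theta_{ij}$ followed by an optimization over the simplex, it uses the identity
\[
\frac{(n-|A_i|)(n^2-18e(A_i))}{9(n-2|A_i|)}+e(A_i)=\frac{n^2}{18}+\frac{n(n^2-18e(A_i))}{18(n-2|A_i|)}
\]
to isolate $\sum_i\frac{n^2-18e(A_i)}{n-2|A_i|}\le(36\gamma-3)n$, then applies Cauchy--Schwarz with weights $n-2|A_i|$ (which sum to the constant $n$) and Karamata's inequality for the concave square root to obtain $(2+\sqrt{1-18\omega}\,)^2\le36\gamma-3$ in closed form, with no case analysis on the $a_i$ at all. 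That is the step your weighting scheme loses: by bounding $\sum_i\phi(a_i)e(A_i)$ crudely by $\max_i\phi(a_i)\cdot\omega n^2$ you decouple the sizes from the internal edge counts and are forced into the delicate extremal computation. Either complete that computation explicitly or switch to the Cauchy--Schwarz/Karamata combination.
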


\begin{proof}
	As usual we define $\gamma$ such that $|E|=\gamma n^2$.
	Notice that $e(A_i)\le\frac1{60}n^2$ yields $|A_i|<\tfrac12n$ 
	for every $i\in \ZZ/3\ZZ$. 
	If $i, j\in \ZZ/3\ZZ$ are distinct and $T\subseteq A_j$ has size 
	$|T|=\frac12n-|A_i|$, then
	\[
		\frac{n^2}{18}\le e(A_i\dcup T)=e(A_i)+e(A_i, T)+e(T)\,.
	\]
	By averaging over $T$ and multiplying with $|A_j|/(\frac12n-|A_i|)$ we deduce
	\[
		 \frac{|A_j|(n^2-18e(A_i))}{9(n-2|A_i|)}
		 \le
		 e(A_i, A_j)+\frac{\frac12n-|A_i|}{|A_j|}e(A_j)\,.
	\]
	Again there are six estimates of this form and this time their addition yields
	\[
		\sum_{i\in\ZZ/3\ZZ}\frac{(n-|A_i|)(n^2-18e(A_i))}{9(n-2|A_i|)}
		\le
		\sum_{i\in\ZZ/3\ZZ}\bigl(2e(A_i, A_{i+1})+e(A_i)\bigr)\,.
	\]
	Because of the identical equation 
	\[
		\frac{(n-|A_i|)(n^2-18e(A_i))}{9(n-2|A_i|)}+e(A_i)
		=
		\frac{n^2}{18}+\frac{n(n^2-18e(A_i))}{18(n-2|A_i|)}
	\]
	this implies 
	\[
		\sum_{i\in\ZZ/3\ZZ}\frac{n^2-18e(A_i)}{n-2|A_i|}
		\le
		(36\gamma-3)n\,.
	\]
	Owing to the Cauchy-Schwarz inequality we have 
	\[
		\Bigl(\sum_{i\in\ZZ/3\ZZ}\sqrt{n^2-18e(A_i)}\Bigr)^2
		\le
		\sum_{i\in\ZZ/3\ZZ}\frac{n^2-18e(A_i)}{n-2|A_i|}\times 
		\sum_{i\in\ZZ/3\ZZ}(n-2|A_i|)
	\]
	and the concavity of the square root entails (by Karamata's inequality)
	\[
		(2+\sqrt{1-18\omega})n\le \sum_{i\in\ZZ/3\ZZ}\sqrt{n^2-18e(A_i)}\,.
	\]
	Altogether we have thereby proved
	\[	
		(2+\sqrt{1-18\omega})^2\le 36\gamma-3\,,
	\]
	i.e., $8-18\omega+4\sqrt{1-18\omega}\le 36\gamma$. Since $\sqrt{1-x}\ge 1-\frac59x$
	holds for all $x\in \bigl[0, \frac3{10}\bigr]$, this implies
	%
	\[
		36\gamma\ge 8-18\omega+4(1-10\omega)=12-58\omega\,,
	\]
	as desired. 
\end{proof}
 
Recall that in Subsection~\ref{subsec:par} we explained why Proposition~\ref{p:main}
implies Theorem~\ref{thm:main}. Thus the argument that follows will complete the proof
of our main result. 

\begin{proof}[Proof of Proposition~\ref{p:main}]
	Fix a partition $V=V_1\dcup V_2\dcup V_3\dcup Z$
	such that the sets $V_1$, $V_2$, and $V_3$ are independent and 
	subject to this $|Z|$ is as small as possible. Define the real numbers 
	$\gamma$ and~$z$ by $|E|=\gamma n^2$ and $|Z|=zn$. Owing to the Lemmata~\ref{l:33}
	and~\ref{l:41} we already know 
	\begin{equation}\label{eq:0027}
		\gamma\ge\frac 7{24} 
		\quad \text{ as well as } \quad 
		z\le 1-\frac 1{3(1-2\gamma)}\le \frac 15\,.
	\end{equation}

	We proceed by verifying the main hypothesis of Lemma~\ref{l:42}.
	
	\begin{clm}\label{clm:1}
		If $i, j\in \ZZ/3\ZZ$ are distinct, then $|V_i|+|V_j|\ge \tfrac12 n$.
	\end{clm}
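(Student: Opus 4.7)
The plan is to argue by contradiction: suppose, after relabelling if necessary, that $|V_1|+|V_2|<\tfrac12 n$, whence $|V_3|+|Z|>\tfrac12 n$. First I would collect the consequences of the minimality of $|Z|$: every vertex $z\in Z$ has at least one neighbour in each $V_i$ (otherwise $z$ could be absorbed into some $V_i$), and more generally $\alpha(G[V_i\cup Z])=|V_i|$ (otherwise a larger independent set in $V_i\cup Z$ would decrease $|Z|$). Combined with Corollary~\ref{c:26} and~\eqref{eq:0027}, the standing assumption forces $|V_i|\le\tfrac13 n$, $|Z|\le\tfrac15 n$, and $|Z|>\tfrac16 n$, with $|V_3|>\tfrac{3}{10}n$ as well.

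Next, I would attempt to construct a set $X\subseteq V$ of size $\tfrac12 n$ with $e(X)<\tfrac{n^2}{18}$, contradicting extremality. The natural choice is $X=V_1\dcup V_2\dcup S$ with $S\subseteq V_3$ of size $\tfrac12 n-|V_1|-|V_2|$ (which exists because $|V_3|>\tfrac12 n-|V_1|-|V_2|$). Since $V_3$ is independent, $e(X)=e(V_1,V_2)+e(V_1\dcup V_2,S)$, and after choosing $S$ to minimise the second term by averaging we get
\[
	e(X)\le e(V_1,V_2)+\frac{\tfrac12 n-|V_1|-|V_2|}{|V_3|}\bigl(e(V_1,V_3)+e(V_2,V_3)\bigr).
\]
Applying Lemma~\ref{p:31} bounds every $e(V_i,V_j)$ by $(\gamma-\tfrac29)n^2$. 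In parallel, averaging the extremality hypothesis $e(X')\ge\tfrac{n^2}{18}$ over uniformly random $X'\subseteq V_3\cup Z$ of size $\tfrac12 n$ yields the lower bound
\[
	e(V_3,Z)+e(Z)\ge\frac{2(|V_3|+|Z|)^2}{9},
\]
and the maximum-degree bound from Lemma~\ref{l:37} applied to every $v\in Z$ controls $\sum_{z\in Z}d(z)=2e(Z)+\sum_i e(V_i,Z)$.

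The main obstacle is that none of these inequalities contradicts the assumption on its own; each is individually consistent with $\gamma\in\bigl[\tfrac7{24},\tfrac13\bigr]$. The contradiction must emerge by pressing them together with the edge-count identity $\sum_{i<j}e(V_i,V_j)+\sum_i e(V_i,Z)+e(Z)=\gamma n^2$, producing an overdetermined system in the variables $\gamma$, $|V_3|/n$, and $|Z|/n$ on the small feasible region carved out by $|V_3|+|Z|>\tfrac12 n$, $|V_3|\le\tfrac13 n$, and $|Z|\le\tfrac15 n$. I expect this computation to be delicate, since the extremal configuration is the balanced Tur\'an graph $T(n,3)$ with $\gamma=\tfrac13$, $|V_i|=\tfrac13 n$, $|Z|=0$, where the desired inequality $|V_1|+|V_2|\ge\tfrac12 n$ holds with some room to spare; any departure from that configuration must be charged against a strict loss in at least one of the edge counts above.
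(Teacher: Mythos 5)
Your argument does not close, and the gap is not merely one of bookkeeping: the system of inequalities you propose to ``press together'' is in fact satisfiable, so no computation, however delicate, can extract a contradiction from it. To see this, take $\gamma=\tfrac7{24}$, $|Z|=\tfrac n5$, $|V_3|=\tfrac n3$, $|V_1|=|V_2|=\tfrac{7n}{30}$ (all your preliminary constraints hold: $|V_1|+|V_2|=\tfrac{7}{15}n<\tfrac12n$, $|V_i|\le\tfrac13 n$, $|Z|\in(\tfrac16n,\tfrac15n]$). Your averaging bound reads
\[
	\min_S e\bigl(V_1\dcup V_2\dcup S\bigr)\le e(V_1,V_2)+\tfrac1{10}\bigl(e(V_1,V_3)+e(V_2,V_3)\bigr),
\]
and one can choose, say, $e(V_1,V_2)=0.05\,n^2$, $e(V_1,V_3)+e(V_2,V_3)=0.121\,n^2$, $e(V_3,Z)=0.064\,n^2$, $e(Z)=0$ and $e(V_1,Z)+e(V_2,Z)\approx0.057\,n^2$: this is consistent with the edge-count identity, with Lemma~\ref{p:31} (each pair is below $(\gamma-\tfrac29)n^2=\tfrac5{72}n^2$), with your lower bound $e(V_3,Z)+e(Z)\ge\tfrac29(|V_3|+|Z|)^2$, and with the degree bound of Lemma~\ref{l:37}, yet the right-hand side above is about $0.062\,n^2>\tfrac{n^2}{18}$. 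The underlying reason is that your constraints give no useful \emph{lower} bound on the individual quantities $e(V_i,V_j)$, and such lower bounds are exactly what drives the actual proof.

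The paper obtains them as follows. First it rules out that \emph{two} pairs fail: adding the two failing inequalities gives $|V_j|+|V_k|>n-2|Z|$, which by~\eqref{eq:0027} is at least $2\gamma n$, contradicting Lemma~\ref{l:37}. One may therefore assume that only $|V_1|+|V_2|<\tfrac12n$ fails while $|V_3|+|V_1|,|V_3|+|V_2|\ge\tfrac12n$, and this is the step that unlocks everything: for $P\subseteq V_1$ with $|V_3\dcup P|=\tfrac12n$ the set $V_3\dcup P$ spans only $V_3$--$P$ edges, so extremality plus averaging yields $e(V_3,V_1)\ge\frac{|V_1|n^2}{9(n-2|V_3|)}$, and likewise for $e(V_3,V_2)$. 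Playing these lower bounds against the upper bound of Lemma~\ref{p:31} and eliminating $t=|V_1|+|V_2|<\tfrac12n$ via~\eqref{eq:0027} produces $(24\gamma-7)(18\gamma-5)<0$, contradicting $\gamma\ge\tfrac7{24}$. Your preliminary observations (the minimality consequences for $Z$, $|Z|>\tfrac16n$, $|V_3|>\tfrac3{10}n$) are correct but none of them substitutes for this missing input.
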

	
	\begin{proof}
		If there are two exceptions, say $|V_1|+|V_2|<\frac12n$ and $|V_1|+|V_3|<\frac12n$,
		then 
		\begin{align*}
			|V_2|+|V_3|
			&=
			2n-2|Z|-(|V_1|+|V_2|)-(|V_1|+|V_3|) \\
			&>
			n-2|Z|
			\overset{\eqref{eq:0027}}{\ge}
			\frac{2n}{3(1-2\gamma)}-n
			\overset{\eqref{eq:0027}}{\ge}
			2\gamma n
		\end{align*}
		contradicts Lemma~\ref{l:37}. 
		
		So by symmetry it suffices to refute that $|V_2|+|V_3|<\frac12n$, while
		$|V_1|+|V_2|, |V_1|+|V_3|\ge\frac12n$.
		Now every set $P\subseteq V_2$ of size $|P|=\frac12n-|V_1|$ satisfies 
		$e(P, V_1)\ge \frac1{18}n^2$ and by averaging we infer 
		\[
			e(V_1, V_2)\ge \frac{|V_2|n^2}{9(n-2|V_1|)}\,.
		\]
		A similar estimate holds with $V_3$ instead of $V_2$. Adding them both 
		and combining the result with Lemma~\ref{p:31} we get
		\[
			\frac{|V_2|+|V_3|}{n-2|V_1|}\le 18\gamma-4\,.
		\]

		Intending to solve this for $t=|V_2|+|V_3|$ we rewrite the denominator as
		$2t-(1-2z)n$ and obtain $(1-2z)(18\gamma-4)n\le 9(4\gamma-1)t$. 
		In view of $t<\frac12n$ and~\eqref{eq:0027} this implies 
		\[	
			\frac{(6\gamma-1)(18\gamma-4)}{3(1-2\gamma)}<\frac{9(4\gamma-1)}2\,,
		\]
		which rewrites as $(24\gamma-7)(18\gamma-5)<0$.
		This contradiction to $\gamma\ge\frac7{24}$ concludes the proof of Claim~\ref{clm:1}.
	\end{proof}
	
	Now Lemma~\ref{l:42} tells us that
	\begin{equation}\label{eq:ende}
		\gamma
		\ge
		\frac{1+z+z^2}{3(1+2z)}\,.
	\end{equation}

	Our next goal is to derive an upper bound on $\gamma$ in terms of $z$. 
	To this end we shall estimate the degrees of the vertices in $V_i$ as follows. 
	
	\begin{clm}\label{clm:0044}
		If $i\in\ZZ/3\ZZ$ and $x_i\in V_i$, then 
		\[
			d(x_i)
			\le 
			\frac n3+\frac{5(|V_{i+1}|+|V_{i-1}|)}8-\frac{n^2}{18(|V_{i+1}|+|V_{i-1}|)}\,.
		\]
	\end{clm}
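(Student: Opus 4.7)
My plan is to deduce the claimed inequality from Lemma~\ref{lem:1243} applied to the induced triangle-free graph $G[N(x_i)]$ with parameter $m=n/2$, combined with the minimality of $|Z|$ in the choice of partition.

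To set up, write $s=|V_{i+1}|+|V_{i-1}|$ and
\[
  f(t)=\frac{n}{3}+\frac{5t}{8}-\frac{n^2}{18t}\,.
\]
Since $f'(t)=\tfrac58+\tfrac{n^2}{18t^2}>0$, the function $f$ is strictly increasing on $(0,\infty)$. Claim~\ref{clm:1} guarantees $s\ge n/2$, hence $f(s)\ge f(n/2)=\tfrac{77n}{144}>\tfrac n2$. This disposes of the easy regime $d(x_i)<n/2$ at once, so from this point on I would assume $d(x_i)\ge n/2$.

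In the main case I would verify the hypotheses of Lemma~\ref{lem:1243} for $G[N(x_i)]$ with $m=n/2$: the graph is triangle-free because $G$ is $K_4$-free, and every $m$-subset of $N(x_i)\subseteq V$ spans at least $2m^2/9=n^2/18$ edges by extremality of $G$. The lemma then yields disjoint independent sets $A,B\subseteq N(x_i)$ for which $t=|A|+|B|$ satisfies
\[
  d(x_i)\le \frac{2m}{3}+\frac{5t}{8}-\frac{2m^2}{9t}=f(t)\,.
\]

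The crux, and the step I expect to demand the most thought, is to establish $t\le s$ by invoking the minimality of $|Z|$. Since $V_i$ is independent, we have $N(x_i)\cap V_i=\vn$, and hence $A\dcup B$ is disjoint from $V_i$. Consequently
\[
  V=V_i\dcup A\dcup B\dcup Z''\qquad\text{with}\qquad Z''=V\sm(V_i\cup A\cup B)
\]
is a partition of $V$ into three independent sets and a remainder that competes with the chosen one. Minimality of $|Z|$ forces $|Z''|\ge|Z|$, which rearranges to $|A|+|B|\le|V_{i+1}|+|V_{i-1}|$, i.e.\ $t\le s$. Combined with the monotonicity of $f$ on $(0,\infty)$, this yields $d(x_i)\le f(t)\le f(s)$, which is the desired inequality.
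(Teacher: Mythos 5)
Your proposal is correct and is essentially the paper's own argument: both reduce to the case $d(x_i)\ge n/2$ using Claim~\ref{clm:1} and the monotonicity of $h(t)=\tfrac n3+\tfrac{5t}{8}-\tfrac{n^2}{18t}$, then apply Lemma~\ref{lem:1243} to $G[N(x_i)]$ with $m=n/2$ to produce disjoint independent sets inside $N(x_i)$, and finally use the minimality of $|Z|$ (noting these sets are disjoint from $V_i$) to bound their total size by $|V_{i+1}|+|V_{i-1}|$.
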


%

	\begin{proof}
		In terms of the function $h(t)=\frac13 n+\frac58 t-\frac{n^2}{18t}$, which 
		is defined for all $t>0$, we are to prove $d(x_i)\le h(|V_{i+1}|+|V_{i-1}|)$.
		Since $t$ is increasing, Claim~\ref{clm:1} tells us 
		\[
			h(|V_{i+1}|+|V_{i-1}|)\ge h(\tfrac12 n)>\tfrac12 n
		\]
		and thus we may assume that $d(x_i)\ge \frac12 n$. Due to Lemma~\ref{lem:1243}
		there are two disjoint independent sets $V'_{i-1}, V'_{i+1}\subseteq N(x_i)$ such that 
		$t'=|V'_{i-1}|+|V'_{i+1}|$ satisfies $d(x_i)\le h(t')$. As $V_i$ is independent, the 
		sets $V_i$, $V'_{i-1}$, and $V'_{i+1}$ are mutually disjoint and thus the minimality 
		of $|Z|$ implies $t'\le |V_{i-1}|+|V_{i+1}|$. So the monotonicity of $h$ yields 
		indeed $d(x_i)\le h(|V_{i-1}|+|V_{i+1}|)$.
	\end{proof}
	
	Taking into account that due to Lemma~\ref{l:37} the vertices in $Z$ have degree 
	at most $(\frac32\gamma+\frac16)n$ we now obtain
	\begin{align*}
		2\gamma n^2
		&=
		\sum_{x\in V}d(x) \\
		&\le
		\sum_{i\in \ZZ/3\ZZ}|V_i|\left(\frac n3+\frac{5(|V_{i+1}|+|V_{i-1}|)}8-\frac{n^2}{18(|V_{i+1}|+|V_{i-1}|)}\right)+|Z|(\tfrac32\gamma+\tfrac16)n\,.
	\end{align*}
	In view of $|V_1||V_2|+|V_2||V_3|+|V_3||V_1|\le \frac13(n-|Z|)^2$ and Nesbitt's
	inequality 
	\[
		\frac{|V_1|}{|V_2|+|V_3|}+\frac{|V_2|}{|V_3|+|V_1|}+\frac{|V_3|}{|V_1|+|V_2|}
		\ge
		\frac32
	\]
	this leads us to 
	\[
		(2-\tfrac32z)\gamma
		\le 
		\tfrac13(1-z)+\tfrac{5}{12}(1-z)^2-\tfrac1{12}+\tfrac16 z
		=
		(2-\tfrac32z)(\tfrac13-\tfrac z4)+\tfrac1{24}z^2\,,
	\]
	whence 
	\begin{equation}\label{eq:0039}
		\gamma
		\le 
		\frac13-\frac z4+\frac{z^2}{48-36z}
		\overset{\eqref{eq:0027}}{\le}
		\frac13-\frac z4+\frac{z^2}{36}\,.
	\end{equation}

	Together with~\eqref{eq:ende} this demonstrates
	\[
		\frac{1+z+z^2}{3(1+2z)}
		\le
		\frac13-\frac z4+\frac{z^2}{48-36z}
	\]
	and thus
	\[
		0\le z(2-21z+16z^2)\,,
	\]
	which is easily verified to imply 
	\begin{equation}\label{eq:45}
		z\le\tfrac 3{29}
	\end{equation}
	(or $z>1$, but this would contradict~\eqref{eq:0027}). Next we plan to split $Z$
	into three parts and to adjoin these to $V_1$, $V_2$, and $V_3$, thus creating the 
	situation considered in Lemma~\ref{l:43}. Of course it is recommendable to move every 
	vertex $z\in Z$ into a vertex class $V_i$ it has only few neighbours in and the
	subsequent claim will help us with the analysis of this process. 
	
	\begin{clm}
		If $x\in Z$, then $\min\bigl\{|N(x)\cap V_i|\colon i\in \ZZ/3\ZZ\bigr\}\le \frac 2{17}n$.
	\end{clm}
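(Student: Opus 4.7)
The plan is to argue by contradiction. Suppose $d_i := |N(x) \cap V_i| > \tfrac{2}{17}n$ for every $i \in \ZZ/3\ZZ$ and set $N_i := N(x) \cap V_i$. My goal is to construct an alternative partition $V = V_1' \dcup V_2' \dcup V_3' \dcup Z'$ into three independent sets with $|Z'|<|Z|$, contradicting the minimality of $|Z|$ built into the choice of the partition at the start of the proof of Proposition~\ref{p:main}.

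First I would move $x$ into $V_1$, forming the independent set $V_1^\star := (V_1 \setminus N_1) \cup \{x\}$ (independent because $x$ has no neighbors in $V_1 \setminus N_1$). The vertices of $N_1$ then have to be redistributed: a vertex $v \in N_1$ can safely be added to $V_j$ for $j \in \{2,3\}$ exactly when $N(v) \cap V_j = \emptyset$, since $N_1$ is itself independent. Letting $W$ denote the set of vertices $v \in N_1$ having neighbors in \emph{both} $V_2$ and $V_3$, the best possible redistribution yields $|Z'| = |Z| - 1 + |W|$, so minimality of $|Z|$ forces $|W| \ge 1$. The real target of the argument is the much stronger quantitative statement that, when $d_1, d_2, d_3 > \tfrac{2}{17}n$, one can further reshuffle so that $W$ is actually emptied.

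To push this to a contradiction, I would combine the above minimality setup with the bounds already available: Lemma~\ref{p:31}, giving $e(N_i, V_j) \le \gamma n^2 - \tfrac{2}{9}n^2$ since $N_i$ and $V_j$ are disjoint independent sets; the maximum-degree bound $\Delta(G) \le \bigl(\tfrac{3}{2}\gamma + \tfrac{1}{6}\bigr)n$ from Lemma~\ref{l:37}; and the established inequalities $\gamma \le \tfrac{1}{3} - \tfrac{z}{4} + \tfrac{z^2}{36}$ from~\eqref{eq:0039} and $z \le \tfrac{3}{29}$ from~\eqref{eq:45}. The calculation should proceed by iterating the swap symmetrically for $i = 1, 2, 3$, each time producing a ``forced'' subset $W_i \subseteq N_i$ of vertices with neighbors in the other two classes, and then summing the resulting restrictions to obtain an aggregated inequality that fails precisely when each $d_i$ exceeds $\tfrac{2}{17}n$.

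The key obstacle is closing the gap between the weak conclusion ``$|W| \ge 1$'', produced by bare minimality of $|Z|$, and the much stronger quantitative fact that $|W|$ can be arranged to equal $0$ once $d_i > \tfrac{2}{17}n$ for all $i$. I expect that bridging this gap will require either a two-stage redistribution (first freeing space inside $V_2$ and $V_3$ by symmetric swaps of vertices from $N_2$ and $N_3$, then absorbing the surviving members of $W$ into those freed slots), or a direct Ramsey-Tur\'an analysis of the triangle-free graph $G[N(x)]$, which already contains $N_1, N_2, N_3$ as three disjoint independent sets of total size exceeding $\tfrac{6}{17}n$; Lemma~\ref{lem:1243} applied inside $G[N(x)]$ should then be the source of the specific threshold $\tfrac{2}{17}$.
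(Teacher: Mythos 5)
Your proposal is a plan rather than a proof, and the plan points in the wrong direction. You yourself flag the decisive gap: minimality of $|Z|$ only yields the qualitative statement $|W|\ge 1$ after your redistribution, and you never carry out the ``aggregated inequality'' that is supposed to fail when all $d_i>\frac2{17}n$. Neither of your two suggested repairs closes it. A two-stage reshuffle still only interacts with the partition through the minimality of $|Z|$, which is far too weak to produce a specific numerical threshold like $\frac2{17}$; and Lemma~\ref{lem:1243} applied to $G[N(x)]$ produces \emph{two} large disjoint independent sets inside a triangle-free graph satisfying a local density hypothesis --- it says nothing that would forbid three large independent pieces $N_1,N_2,N_3$ from coexisting in $N(x)$, so it cannot be ``the source of the threshold $\frac2{17}$''.

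The actual mechanism is entirely different and does not use the minimality of $|Z|$ at all: one finds a $K_4$ through $x$. Writing $K_i=N(x)\cap V_i$ and assuming $|K_i|>\frac2{17}n$ for all $i$, choose $L_i\subseteq K_i$ with $\frac2{17}n<|L_i|\le\frac12n-|V_{i+1}|$ (possible by Corollary~\ref{c:26}) and extend $L_i$ to $M_i\subseteq V_i$ of size exactly $\frac12n-|V_{i+1}|$ (possible by Claim~\ref{clm:1}). The extremal hypothesis applied to the half-sized set $M_i\dcup V_{i+1}$ bounds the number of \emph{missing} edges between $L_i$ and $L_{i+1}$ by $(\frac12n-|V_{i+1}|)|V_{i+1}|-\frac1{18}n^2$; summing the three resulting missing-edge densities and using $|V_1|^2+|V_2|^2+|V_3|^2\ge\frac13(n-|Z|)^2$ together with $z\le\frac3{29}$ shows the total is below $1$, so some transversal triple $(\ell_1,\ell_2,\ell_3)\in L_1\times L_2\times L_3$ spans a triangle, which with $x$ gives a $K_4$. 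This is where $\frac2{17}$ comes from (via the factor $\frac{289}{4n^2}=\frac1{|L_i||L_{i+1}|}$-type bound), and it is the idea your proposal is missing.
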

	
	\begin{proof}
		Assume contrariwise that each of the three sets $K_i=N(x)\cap V_i$ satisfies
		$|K_i| > \frac 2{17}n$. By Corollary~\ref{c:26} we have $|V_{i+1}|\le \frac 13n$ for 
		every $i\in \ZZ/3\ZZ$ and, hence, there exists a set $L_i\subseteq K_i$
		such that $ \frac 2{17}n < |L_i|\le \frac 12n- |V_{i+1}|$. Moreover, 
		Claim~\ref{clm:1} shows that there is a set~$M_i$ such that $L_i\subseteq M_i\subseteq V_i$
		and $|M_i|=\frac12n- |V_{i+1}|$. Now $e(M_i\dcup V_{i+1})\ge \frac 1{18}n^2$ implies 
		that the number of missing edges between $M_i$ and $V_{i+1}$ is at 
		most $(\frac12n- |V_{i+1}|)|V_{i+1}|-\frac 1{18}n^2$. In particular, there are
		at most that many missing edges between $L_i$ and $L_{i+1}$, for which reason		%
		\[
			 \frac{|L_i||L_{i+1}|-e(L_i, L_{i+1})}{|L_i||L_{i+1}|}
			 <
			 \frac{289}{4n^2}
			 \left(\left(\frac n2- |V_{i+1}|\right)|V_{i+1}|-\frac {n^2}{18}\right)\,.
		\]
		Summing all three such estimates and 
		exploiting $|V_1|^2+|V_2|^2+|V_3|^2\ge \frac13 (n-|Z|)^2$ we obtain
		\[
			\sum_{i\in\ZZ/3\ZZ}\frac{|L_i||L_{i+1}|-e(L_i, L_{i+1})}{|L_i||L_{i+1}|}
			<
			\frac{289}{4}\left(\frac{1-z}{2}-\frac{(1-z)^2}{3}-\frac 16\right)
			=
			\frac{289z(1-2z)}{24}
			\overset{\eqref{eq:45}}{<}
			1
		\]
		and thus
		\begin{equation}\label{eq:46}
			\sum_{i\in\ZZ/3\ZZ}|L_{i-1}|\bigl(|L_i||L_{i+1}|-e(L_i, L_{i+1})\bigr)
			<
			|L_1||L_2||L_3|\,.
		\end{equation}

		Let us now look at all $|L_1||L_2||L_3|$ 
		triples $(\ell_1, \ell_2, \ell_3)\in L_1\times L_2\times L_3$.
		For every $i\in\ZZ/3\ZZ$ there are exactly 
		$|L_{i-1}|\bigl(|L_i||L_{i+1}|-e(L_i, L_{i+1})\bigr)$ such triples 
		with $\ell_i\ell_{i+1}\not\in E$. So by~\eqref{eq:46} at least one such triple 
		yields a triangle in $G$. As the vertex $x$ extends this triangle to a $K_4$, 
		we have thereby reached a contradiction. 
	\end{proof}
	
	Next we form a partition $V=A_1\dcup A_2\dcup A_3$ such that 
	\begin{enumerate}
		\item[$\bullet$] $A_i\supseteq V_i$ for all $i\in \ZZ/3\ZZ$
		\item[$\bullet$] and $|N(x_i)\cap V_i|\le \frac 2{17}n$ for all $x\in A_i\sm V_i$.
	\end{enumerate}
	The main property of this construction is that the number $\omega$ defined by 
	\[
		\omega n^2=e(A_1)+e(A_2)+e(A_3)
	\]
	is small. In fact a straightforward reasoning shows
	\begin{equation}\label{eq:47}
		\omega
		\le 
		(\tfrac 2{17}n |Z|+e(Z))n^{-2}
		\le 
		\tfrac 2{17}z+\tfrac 13z^2
		\overset{\eqref{eq:45}}{<}
		\tfrac7{46}z
		<\tfrac1{60}\,,
	\end{equation}
	and, therefore, Lemma~\ref{l:43} discloses $36\gamma\ge 12-58\omega$. 
	Together with~\eqref{eq:0039} and~\eqref{eq:47} we conclude
	\[
		12-(9-\tfrac 4{23})z\le 12-58\omega\le 12-9z+z^2\,,
	\]
	i.e., $z(\frac4{23}-z)\le 0$. Now~\eqref{eq:45} reveals $z=0$,~\eqref{eq:ende}
	tells us $\gamma\ge \frac13$, and by Tur\'an's theorem $G$ is indeed a tripartite 
	Tur\'an graph. 
\end{proof}

\subsection*{Acknowledgement} I would like to thank the referee for reading this 
article very carefully and providing valuable remarks. 

\begin{bibdiv}
\begin{biblist}
\bib{A}{article}{
   author={Andr\'{a}sfai, B.},
   title={\"{U}ber ein Extremalproblem der Graphentheorie},
   language={German},
   journal={Acta Math. Acad. Sci. Hungar.},
   volume={13},
   date={1962},
   pages={443--455},
   issn={0001-5954},
   review={\MR{145503}},
   doi={10.1007/BF02020809},
}

\bib{BMPP2018}{article}{
   author={Bedenknecht, Wiebke},
   author={Mota, Guilherme Oliveira},
   author={Reiher, Chr.},
   author={Schacht, Mathias},
   title={On the local density problem for graphs of given odd-girth},
   journal={J. Graph Theory},
   volume={90},
   date={2019},
   number={2},
   pages={137--149},
   issn={0364-9024},
   review={\MR{3891931}},
   doi={10.1002/jgt.22372},
}

\bib{BE76}{article}{
   author={Bollob{\'a}s, B{\'e}la},
   author={Erd{\H{o}}s, Paul},
   title={On a Ramsey-Tur\'an type problem},
   journal={J. Combinatorial Theory Ser. B},
   volume={21},
   date={1976},
   number={2},
   pages={166--168},
   review={\MR{0424613}},
}

\bib{B10}{article}{
   author={Brandt, Stephan},
   title={Triangle-free graphs whose independence number equals the degree},
   journal={Discrete Math.},
   volume={310},
   date={2010},
   number={3},
   pages={662--669},
   issn={0012-365X},
   review={\MR{2564822}},
   doi={10.1016/j.disc.2009.05.021},
}

\bib{CG}{article}{
   author={Chung, F. R. K.},
   author={Graham, R. L.},
   title={On graphs not containing prescribed induced subgraphs},
   conference={
      title={A tribute to Paul Erd\H{o}s},
   },
   book={
      publisher={Cambridge Univ. Press, Cambridge},
   },
   date={1990},
   pages={111--120},
   review={\MR{1117008}},
}

\bib{E1}{article}{
   author={Erd\H{o}s, P.},
   title={Problems and results in graph theory and combinatorial analysis},
   conference={
      title={Proceedings of the Fifth British Combinatorial Conference},
      address={Univ. Aberdeen, Aberdeen},
      date={1975},
   },
   book={
      publisher={Utilitas Math., Winnipeg, Man.},
   },
   date={1976},
   pages={169--192. Congressus Numerantium, No. XV},
   review={\MR{0409246}},
}

\bib{Er97a}{article}{
   author={Erd{\H{o}}s, Paul},
   title={Some old and new problems in various branches of combinatorics},
   note={Graphs and combinatorics (Marseille, 1995)},
   journal={Discrete Math.},
   volume={165/166},
   date={1997},
   pages={227--231},
   issn={0012-365X},
   review={\MR{1439273}},
   doi={10.1016/S0012-365X(96)00173-2},
}
		
\bib{EFRS}{article}{
   author={Erd\H{o}s, P.},
   author={Faudree, R. J.},
   author={Rousseau, C. C.},
   author={Schelp, R. H.},
   title={A local density condition for triangles},
   note={Graph theory and applications (Hakone, 1990)},
   journal={Discrete Math.},
   volume={127},
   date={1994},
   number={1-3},
   pages={153--161},
   issn={0012-365X},
   review={\MR{1273598}},
   doi={10.1016/0012-365X(92)00474-6},
}

\bib{EHSS}{article}{
   author={Erd{\H{o}}s, P.},
   author={Hajnal, A.},
   author={S{\'o}s, Vera T.},
   author={Szemer{\'e}di, E.},
   title={More results on Ramsey-Tur\'an type problems},
   journal={Combinatorica},
   volume={3},
   date={1983},
   number={1},
   pages={69--81},
   issn={0209-9683},
   review={\MR{716422}},
   doi={10.1007/BF02579342},
}

\bib{FLZ15}{article}{
   author={Fox, Jacob},
   author={Loh, Po-Shen},
   author={Zhao, Yufei},
   title={The critical window for the classical Ramsey-Tur\'an problem},
   journal={Combinatorica},
   volume={35},
   date={2015},
   number={4},
   pages={435--476},
   issn={0209-9683},
   review={\MR{3386053}},
   doi={10.1007/s00493-014-3025-3},
}

\bib{KeSu06}{article}{
   author={Keevash, Peter},
   author={Sudakov, Benny},
   title={Sparse halves in triangle-free graphs},
   journal={J. Combin. Theory Ser. B},
   volume={96},
   date={2006},
   number={4},
   pages={614--620},
   issn={0095-8956},
   review={\MR{2232396}},
   doi={10.1016/j.jctb.2005.11.003},
}

\bib{K}{article}{
   author={Krivelevich, Michael},
   title={On the edge distribution in triangle-free graphs},
   journal={J. Combin. Theory Ser. B},
   volume={63},
   date={1995},
   number={2},
   pages={245--260},
   issn={0095-8956},
   review={\MR{1320169}},
   doi={10.1006/jctb.1995.1018},
}

\bib{LM}{article}{
   author={Liu, Xizhi},
   author={Ma, Jie},
   title={Sparse halves in $K_4$-free graphs},
   journal={J. Graph Theory},
   volume={99},
   date={2022},
   number={1},
   pages={5--25},
   issn={0364-9024},
   review={\MR{4371476}},
   doi={10.1002/jgt.22722},
}

\bib{LPR2}{article}{
   author={\L uczak, Tomasz},
   author={Polcyn, Joanna},
   author={Reiher, Chr.},
   title={On the Ramsey-Tur\'{a}n density of triangles},
   journal={Combinatorica},
   volume={42},
   date={2022},
   number={1},
   pages={115--136},
   issn={0209-9683},
   review={\MR{4426766}},
   doi={10.1007/s00493-021-4340-0},
}

\bib{LR-a}{article}{
	author={L\"{u}ders, Clara Marie},
	author={Reiher, Chr.},
	title={The Ramsey--Tur\'{a}n problem for cliques},
	journal={Israel J. Math.},
	volume={230},
	date={2019},
	number={2},
	pages={613--652},
	issn={0021-2172},
	review={\MR{3940430}},
	doi={10.1007/s11856-019-1831-4},
}		

\bib{M}{article}{
	author={Mantel, W.},
	title={Problem 28 (Solution by H. Gouwentak, W. Mantel, J. Teixeira de Mattes, 
		F. Schuh and W. A. Wythoff)},
	journal={Wiskundige Opgaven},
	date={1907},
	number={10},
	pages={60--61}
}

\bib{NY}{article}{
   author={Norin, Sergey},
   author={Yepremyan, Liana},
   title={Sparse halves in dense triangle-free graphs},
   journal={J. Combin. Theory Ser. B},
   volume={115},
   date={2015},
   pages={1--25},
   issn={0095-8956},
   review={\MR{3383248}},
   doi={10.1016/j.jctb.2015.04.006},
}
	
\bib{R}{article}{
   author={Razborov, A. A.},
   title={More about sparse halves in triangle-free graphs},
   language={Russian, with Russian summary},
   journal={Mat. Sb.},
   volume={213},
   date={2022},
   number={1},
   pages={119--140},
   issn={0368-8666},
   review={\MR{4360109}},
   doi={10.4213/sm9615},
}

\bib{Sz72}{article}{
   author={Szemer{\'e}di, Endre},
   title={On graphs containing no complete subgraph with $4$ vertices},
   language={Hungarian},
   journal={Mat. Lapok},
   volume={23},
   date={1972},
   pages={113--116 (1973)},
   issn={0025-519X},
   review={\MR{0351897}},
}

\bib{T}{article}{
	author={Tur\'an, Paul},
	title={On an extremal problem in graph theory},
	journal={Matematikai \'es Fizikai Lapok (in Hungarian)},
	date={1948},
	pages={436--452}
}
\end{biblist}
\end{bibdiv}
\end{document}